\def\graybox(#1,#2){
\x=#1 \y=#2 
\z=\x \t=\y
\advance\z by 10 
\advance\t by 10 
\psframe[fillstyle=solid,fillcolor=lightgray,linewidth=0pt](\x,\y)(\z,\t) 
\psline[linewidth=.5pt](\x,\y)(\x,\t)(\z,\t)(\z,\y)(\x,\y)}
\def\emptygraybox(#1,#2){
\x=#1 \y=#2 
\z=\x \t=\y
\advance\z by 10 
\advance\t by 10 
\psframe[fillstyle=solid,fillcolor=lightgray,linewidth=0pt,linecolor=lightgray](\x,\y)(\z,\t)}
\def\blankbox(#1,#2){
\x=#1 \y=#2 
\z=\x \t=\y
\advance\z by 10 
\advance\t by 10 
\psline[linewidth=.5pt](\x,\y)(\x,\t)(\z,\t)(\z,\y)(\x,\y)}
\newtheorem{thm}{Theorem}
\newtheorem{lem}[thm]{Lemma}
\newtheorem{prop}[thm]{Proposition}
\newtheorem{cor}[thm]{Corollary}
\numberwithin{thm}{section}
\theoremstyle{definition}
\newtheorem{rmk}[thm]{Remark}
\renewcommand{\P}{\mathbb{P}}
\newcommand{\Z}{\mathbb{Z}}
\newcommand{\on}{\operatorname}
\newcommand{\bull}{ {\scriptscriptstyle{\bullet}}  }
\newcommand{\bG}{\mathbf{\Gamma}}
\renewcommand{\max}{{\text{max}}}
\newcommand{\Pf}{ \on{Pf} }
\newcommand{\rank}{ \on{rank} }
\newcommand{\Det}{ \on{Det} }
\newcommand{\Ker}{ \on{Ker} }
\newcommand{\sgn}{ \on{sgn} }
\newcommand{\wt}{ \on{wt} }
\newcommand{\tenS}{\mathbf{S}}              
\newcommand{\Sgp}{\mathrm{S}} 
\newcommand{\bLambda}{\bm{\Lambda}}    
\newcommand{\bGamma}{\bm{\Gamma}} 
\newcommand{\del}{ \partial }
\let\emph\relax 
\DeclareTextFontCommand{\emph}{\bfseries\em}
\begin{document}
\title{Identities for Schur-type determinants and pfaffians}
\date{March 30, 2021}
\author{David Anderson}
\email{anderson.2804@osu.edu}
\address{Department of Mathematics,
The Ohio State University,
Columbus, Ohio, 43210}

\thanks{DA is partially supported by NSF CAREER DMS-1945212.}

\author{William Fulton}
\email{wfulton@umich.edu}
\address{Department of Mathematics,
University of Michigan,
Ann Arbor, Michigan  48109}

\begin{abstract}
We give a simple formula for some determinants, and an analogous formula for pfaffians, both of which are polynomial identities. The second involve some expressions that interpolate between determinants and pfaffians.  We give several tableau formulas for difference operators in Types A and C, as well as other operators that appear in the ``enriched", or ``back-stable" Schubert polynomials in \cite{AF2}.   There are also tableau formulas for the enriched Schubert polynomials for vexillary and $3 2 1$-avoiding permutations.
\end{abstract}

\maketitle

\setcounter{tocdepth}{2}
\makeatletter
\def\l@subsection{\@tocline{2}{0pt}{4pc}{8pc}{}}
\makeatother

\tableofcontents

\section*{Introduction}  

As far as we know, Kempf and Laksov \cite{KL} were the first to consider Schur-like determinants with rows depending on different variables.  Since then such determinants have appeared as flagged Schur or multi-Schur polynomials, where the sets of variables increase (or decrease) along rows, cf.~\cite{LS}, \cite{W}, \cite{CYY}.  In the first section, we prove a simple identity for such polynomials with no flag conditions on the rows.  To explain the result, we need some notation.

By a \emph{Chern series}  $c$  we mean a power 
series $c = \sum_{k=0}^\infty c_k t^k$, with the $c_k$  in some 
commutative ring and $c_0 = 1$; set $c_k = 0$ for $k < 0$. 
Multiplication of power series makes the Chern series with 
coefficients in a fixed ring into an abelian group, so 
$c = a \cdot b$ means that $c_k = \sum_{i+j=k} a_i b_j$ for all $k$.
 
For any Chern series $c(1), c(2), \ldots, c(n)$, and any partition $\lambda$ of length at most $n$,
define the \emph{Schur determinant} 
\[
S_\lambda(c(1), \ldots, c(n)) \, = \, \Det(c(k)_{\lambda_k+l-k})_{1 \leq k,l \leq n}.
\]
 For a pair of partitions $\mu \subset \lambda$, set
\[
S_{\lambda/\mu}(c(1), \ldots, c(n)) \, = \, \Det(c(k)_{\lambda_k-\mu_l+l-k})_{1 \leq k,l \leq n}.
\]
If all $c(i)$ are equal to the same $c$, we write these as $S_\lambda(c)$ and $S_{\lambda/\mu}(c)$.

Since the $S_\mu(c)$, as $\mu$ varies over all partitions, form an additive basis of the polynomial ring $\Z[c]=\Z[c_1, c_2, \ldots] $, it follows that, for any Chern series $c$ and $a(1), \ldots, a(n)$, and for any partition $\lambda$ of length at most $n$, there are (unique) polynomials $A_{\lambda,\mu}$  in the variables $a(i)_j$ such that 
\[
 S_\lambda(a(1)\cdot c, \ldots,a(n)\cdot c) \, = \, \sum A_{\lambda,\mu} \, S_\mu(c),
\] 
the sum over partitions $\mu$.  In \S1 we show that, in fact, these coefficients are themselves Schur determinants:
\begin{equation*}
 S_\lambda(a(1)\cdot c, \ldots,a(n)\cdot c) \, = \, \sum_{\mu \subset \lambda} S_{\lambda/\mu}(a(1), \ldots, a(n)) \, S_\mu(c).
\end{equation*}
It is surprising, at least to us, that this simple determinantal identity seems to be new in this generality.  It can be deduced directly from the Cauchy-Binet formula, but we give a different proof.  This identity also gives a variation of the Kempf-Laksov formula in geometry.  

In \S\ref{s.tableaux}, we mention some of the classical cases where such determinants have tableau formulas.  A more general tableau formula is proved that in extreme cases gives formulas for multivariate (double) Schur polynomials and for vexillary double Schubert polynomials.  

The tableau formula is applied to the enriched versions of Schubert polynomials in type A (see \cite{AF2}, inspired by \cite{LLS}).  We give a tableau formula for the enriched Schubert polynomials for arbitrary vexillary and $3 2 1$-avoiding permutations (allowing negative as well as positive integers), and we give explicit formulas for the difference, translation, and twisting operators that appear in this story.  

There is a similar, but more complicated, formula when the Schur determinants are replaced by Schur-like pfaffians; the coefficients are polynomials that interpolate between determinants and pfaffians.  In fact, the formulas are actually polynomial identities, which do not require the alternating property of honest pfaffians.  A tableau formula this time involves removing border strips from shifted Young diagrams. This is applied to give a formula for difference operators in type C.

\section{An Algebraic Identity for Schur-like Determinants}
The identity stated in the introduction concerns determinants whose $(i,j)$ entry depends on a Chern polynomial $c(i) = a(i)\cdot c$ depending on the row.  We will prove the natural generalization to one that depends on the column as well as on the row.  Although the identity in the introduction is the main application, this generalization will be used to compute enriched Schubert polynomials of $3 2 1$-avoiding permutations.

\subsection{The determinantal identity}
Fix a positive integer $n$.  A lower case Greek letter stands for a sequence of integers of length $n$,  so $\kappa = (\kappa_1, \ldots, \kappa_n)$.  We write $\kappa \supset \rho$ to mean that, when the sequences are rearranged in decreasing order, the $i^{\text{th}}$ term of the first is at least as large as the $i^{\text{th}}$  term of the second.  We write $\overline{\kappa}$ for the sequence with $\overline{\kappa}_i = \kappa_i - i$;  so $\overline{\kappa}$ is strictly decreasing when $\kappa$ is weakly decreasing.  

Given $\kappa$ and $\rho$, and Chern series $a(1), \ldots, a(n)$,  $c$, and $b(1), \ldots, b(n)$, 
define $S_{\kappa/\rho}(a(\bull) \, c \, b(\bull))$ by the formula
\[ S_{\kappa/\rho}(a(\bull) \, c \,  b(\bull)) = \Det((a(i)\,c\,b(j))_{\overline{\kappa}_i - \overline{\rho}_j})_{1 \leq i, j \leq n} .\]
Set $S_{\kappa/\rho}(a(\bull)) = \Det(a(i)_{\overline{\kappa}_i - \overline{\rho}_j})$  and $S_{\kappa/\rho}'( b(\bull)) = \Det(b(j)_{\overline{\kappa}_i - \overline{\rho}_j})$.

\begin{thm}\label{mainA}  For all sequences $\kappa$ and $\rho$, and Chern series $a(1), \ldots, a(n)$,  $c$, and $b(1), \ldots, b(n)$,
\[ S_{\kappa/\rho}(a(\bull) \, c \,  b(\bull)) = \sum_{\kappa \supset \lambda \supset \mu \supset \rho}
S_{\kappa/\lambda}(a(\bull)) S_{\lambda/\mu}(c) S_{\mu/\rho}'(b(\bull)),\]
the sum over all weakly decreasing sequences $\lambda$ and $\mu$ of length $n$ 
between $\kappa$ and $\rho$. 
\end{thm}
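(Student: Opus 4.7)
The plan is to realize the matrix on the left-hand side as a product of three (infinite) matrices and then to apply the Cauchy--Binet formula twice. First, I would expand the triple convolution: setting $p = \overline\kappa_i - s$, $q = s - t$, $r = t - \overline\rho_j$ gives
\[
(a(i) \, c \, b(j))_{\overline\kappa_i - \overline\rho_j} \,=\, \sum_{s, t \in \Z} a(i)_{\overline\kappa_i - s} \, c_{s - t} \, b(j)_{t - \overline\rho_j}.
\]
So the matrix $M = \bigl((a(i)\,c\,b(j))_{\overline\kappa_i - \overline\rho_j}\bigr)_{i,j}$ factors as $M = A\,C\,B$, where $A$ is $n \times \Z$ with entry $A_{i,s} = a(i)_{\overline\kappa_i - s}$, $C$ is the $\Z \times \Z$ Toeplitz matrix with $C_{s,t} = c_{s-t}$, and $B$ is $\Z \times n$ with $B_{t,j} = b(j)_{t - \overline\rho_j}$. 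The support conditions $A_{i,s} = 0$ for $s > \overline\kappa_i$, $C_{s,t} = 0$ for $s < t$, and $B_{t,j} = 0$ for $t < \overline\rho_j$ make all relevant sums finite.

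Next, I would apply Cauchy--Binet twice: first to $M = (AC) \cdot B$, decomposing $\det(M) = \sum_S \det((AC)_{\cdot, S}) \det(B_{S, \cdot})$ over $n$-element subsets $S \subset \Z$, and then to each factor $\det((AC)_{\cdot, S}) = \det(A \cdot C_{\cdot, S}) = \sum_T \det(A_{\cdot, T}) \det(C_{T, S})$ over $n$-element subsets $T \subset \Z$. Combining,
\[
\det(M) \,=\, \sum_{T, S} \det(A_{\cdot, T}) \, \det(C_{T, S}) \, \det(B_{S, \cdot}),
\]
where $T = (t_1 > \cdots > t_n)$ and $S = (s_1 > \cdots > s_n)$ range over strictly decreasing $n$-tuples of integers. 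I then reparametrize by $T = \overline\lambda$ and $S = \overline\mu$, with $\lambda, \mu$ weakly decreasing of length $n$. Under this bijection the three factors become $S_{\kappa/\lambda}(a(\bull))$, $S_{\lambda/\mu}(c)$, and $S_{\mu/\rho}'(b(\bull))$, respectively.

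Finally, to match the stated range $\kappa \supset \lambda \supset \mu \supset \rho$, I would invoke the vanishing: for weakly decreasing sequences $\alpha$ and $\beta$ of length $n$, $S_{\alpha/\beta}(\cdot) = 0$ unless $\alpha \supset \beta$. Indeed, if $\alpha_k < \beta_k$, then for any $\sigma \in \SSS_n$ a pigeonhole argument gives some $i \geq k$ with $\sigma(i) \leq k$, and for such $i$ one has $\overline\alpha_i - \overline\beta_{\sigma(i)} \leq \alpha_k - \beta_k < 0$ (using weak decrease of $\alpha$ and $\beta$), killing the corresponding term of the Leibniz expansion.

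The main obstacle is the careful handling of the infinite matrix product. However, because the support conditions confine the relevant $s$ and $t$ to a finite interval determined by $\kappa$ and $\rho$, one may simply truncate $\Z$ to this interval without altering any of the quantities, and then apply the classical Cauchy--Binet formula for finite matrices.
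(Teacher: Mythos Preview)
Your Cauchy--Binet argument is correct and is precisely the route the paper acknowledges but deliberately avoids. The paper instead proves the identity by an induction: it first checks the trivial case $a(i)=b(j)=1$ for all $i,j$, and then uses a small lemma (linearity of the determinant in rows and columns) to show that multiplying any $a(k)$ by $(1+x\,t)$ or any $b(l)$ by $(1+y\,t)$ preserves the identity, modulo the inductive hypothesis for smaller $\kappa$ or larger $\rho$. Iterating, one reaches the case where all $a(k)$ and $b(l)$ are products of linear factors in independent variables, which is generic. Your approach is shorter and more direct for determinants; the paper's inductive scheme is chosen because the identical sequence of moves later establishes the pfaffian analogue (Theorem~4.5), where no Cauchy--Binet is available. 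One small point: your vanishing argument for $S_{\alpha/\beta}$ is stated for weakly decreasing $\alpha,\beta$, but the theorem allows arbitrary $\kappa,\rho$; you should first reduce to the weakly decreasing case by noting, as the paper does, that permuting the entries of $\overline\kappa$ or $\overline\rho$ changes both sides by the same sign.
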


\begin{cor}
For any partitions $\lambda$ and $\nu$ of length at most $n$, and Chern series $a(1), \ldots, a(n)$ and $c$, 
\begin{equation}
S_{\lambda/\nu}(a(1)\,c, \ldots, a(n)\,c)  =  \sum_{\lambda \supset \mu \supset \nu}  S_{\lambda/\mu}(a(1), \ldots, a(n)) \, S_{\mu/\nu}(c),
\end{equation}
the sum over partitions $\mu$ between $\lambda$ and $\nu$.
\end{cor}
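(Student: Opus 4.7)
The plan is to deduce the corollary as the simplest nontrivial specialization of Theorem~\ref{mainA}. After padding the partitions $\lambda,\nu$ (of length at most $n$) with zeros to length exactly $n$, I would apply the theorem with $\kappa = \lambda$ and $\rho = \nu$. The corollary has only a single inner sum whereas the theorem has two, so the natural move is to choose the $b(j)$'s to collapse one of them. Taking each $b(j)$ equal to the trivial Chern series $1$ (i.e., $b(j)_0 = 1$ and $b(j)_k = 0$ for $k>0$) makes $a(i)\cdot c\cdot b(j) = a(i)\cdot c$, so the left-hand side of Theorem~\ref{mainA} becomes the left-hand side of the corollary.

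The one calculation to do is to verify $S'_{\mu/\nu}(1,\ldots,1) = \delta_{\mu,\nu}$ whenever $\mu,\nu$ are weakly decreasing sequences of length $n$. By definition this is the determinant of the $0/1$ matrix whose $(i,j)$ entry is $1$ precisely when $\overline{\mu}_i = \overline{\nu}_j$. Since $\overline{\mu}$ and $\overline{\nu}$ are both strictly decreasing, any permutation contributing a nonzero term must match them entry-by-entry in order, so only the identity permutation contributes, and the product vanishes unless $\overline{\mu} = \overline{\nu}$, i.e., $\mu = \nu$.

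Substituting this into Theorem~\ref{mainA} collapses its inner sum over ``$\mu$'' to the single term $\mu = \nu$; renaming the remaining summation variable (the theorem's ``$\lambda$'') to $\mu$ gives exactly the stated identity. I do not foresee a real obstacle here: the content lies entirely in Theorem~\ref{mainA}, and the only new ingredient is the elementary Kronecker-delta computation for $S'$ at the constant Chern series.
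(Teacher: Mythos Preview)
Your proposal is correct and is exactly the specialization the paper intends: the corollary is stated immediately after Theorem~\ref{mainA} with no separate proof, and setting all $b(j)=1$ (so that $S'_{\mu/\nu}(1,\ldots,1)=\delta_{\mu,\nu}$ via the Kronecker-delta computation you give) is precisely how one reads it off. There is nothing to add.
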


In the special case where $\lambda$ is a partition and the $a(k)$ are all equal to some $a$, the corollary recovers a version of a known formula, cf.~\cite{Mac1} \S{\MakeUppercase{\romannumeral 1}} (5.10):
\[
 S_{\lambda/\nu}(a \cdot c) = \sum  S_{\lambda/\mu}(a) \, S_{\mu/\nu}(c).
\]
the sum over all partitions $\mu$ such that $\lambda\supset \mu\supset \nu$. 

The $S_{\lambda/\mu}(c)$ are not independent, but one can write them in terms of the basis $S_\nu(c)$ by using the identity $S_{\lambda/\mu}(c) = \sum c_{\mu \, \nu}^{\lambda} S_\nu(c)$.

\begin{proof}  We must prove the identity
\[  \Det((a(i)\,c\,b(j))_{\overline{\kappa}_i - \overline{\rho}_j}) = \sum_{\kappa \supset \lambda \supset \mu \supset \rho} \Det(a(i)_{\overline{\kappa}_i - \overline{\lambda}_j}) \Det(c_{\overline{\lambda}_i - \overline{\mu}_j}) \Det(b(j)_{\overline{\mu}_i - \overline{\rho}_j}) .\]
Note first that changing the orderings of $\overline{\kappa}$ or $\overline{\rho}$ in either side of this formula gives the same change of sign in the result.  So we may assume $\kappa$ and $\rho$ are weakly decreasing sequences. 

As the shape of this formula suggests, it can be deduced from the general Cauchy-Binet determinantal formula.  We will give another proof, because the same sequence of steps will be used later in the pfaffian setting. 

Consider first the case where $a(i) = 1$ and $b(j) = 1$ for all $i$ and $j$.  In this case, the only $\lambda$ and $\mu$ that give nonzero terms on the right are when $\lambda = \kappa$ and $\mu = \rho$, in which case the identity is a tautology.

To prove the general case, we need the following lemma, which follows from the linearity of determinants in rows and columns.

\begin{lem}\label{Schurlem} Let $\lambda$ and $\mu$ be sequences of length $n$.  
\begin{enumerate} 
\item Let $x$ be any element of the ring.  Fix $1 \leq k \leq n$. Define $\tilde{a}(k)$ to be 
$(1 + x\, t) \, a(k) $ and $\tilde{a}(i) = a(i)$ for $i \neq k$. Define $\tilde{\lambda}_k = \lambda_k - 1$ and $\tilde{\lambda}_i = \lambda_i$ for $i \neq k$.  Then 
\[  S_{\lambda/\mu}(\tilde{a}(\bull)) = S_{\lambda/\mu}(a(\bull)) + x \, S_{\tilde{\lambda}/\mu}(a(\bull)) .\]

\item  
Let $y$ be any element of the ring.  Fix $1 \leq l \leq n$. Define $\tilde{b}(l)$ to be $(1 + y\, t) \, b(l) $ and $\tilde{b}(j) = b(j)$ for $j \neq l$. Define $\tilde{\mu}_l = \mu_l + 1$ and $\tilde{\mu}_j= \mu_j$ for $j \neq l$.  Then 
\[  S_{\lambda/\mu}'(\tilde{b}(\bull)) = S_{\lambda/\mu}'(b(\bull)) + y \, S_{\lambda/\tilde{\mu}}'(b(\bull)). \]

\end{enumerate}
\end{lem}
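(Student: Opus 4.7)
The plan is to use multilinearity of the determinant directly, after unpacking what multiplying a Chern series by $(1+xt)$ does to its coefficients.

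For part (1), I would start by observing that if $\tilde{a}(k) = (1+xt)\, a(k)$, then the coefficients satisfy $\tilde{a}(k)_m = a(k)_m + x\, a(k)_{m-1}$ for all $m$, with the convention that $a(k)_{m-1} = 0$ when $m \le 0$. Substituting this into the $k$-th row of the defining determinant $S_{\lambda/\mu}(\tilde{a}(\bull)) = \Det(\tilde{a}(i)_{\overline{\lambda}_i - \overline{\mu}_j})$, and using multilinearity in that single row (the other rows are unchanged, since $\tilde{a}(i) = a(i)$ for $i \ne k$), splits the determinant as a sum of two: one with the original $k$-th row, and one whose $k$-th row is $\bigl(a(k)_{\overline{\lambda}_k - \overline{\mu}_j - 1}\bigr)_j$, scaled by $x$.

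The first piece is manifestly $S_{\lambda/\mu}(a(\bull))$. For the second, I would note that with $\tilde{\lambda}_k = \lambda_k - 1$ and $\tilde{\lambda}_i = \lambda_i$ for $i \ne k$, we have $\overline{\tilde{\lambda}}_k = \tilde{\lambda}_k - k = \overline{\lambda}_k - 1$ and $\overline{\tilde{\lambda}}_i = \overline{\lambda}_i$ for $i \ne k$. So the shifted row is exactly the $k$-th row of $S_{\tilde{\lambda}/\mu}(a(\bull))$, and the remaining rows agree with those of $S_{\tilde\lambda/\mu}(a(\bull))$ as well. This identifies the second piece as $x\, S_{\tilde{\lambda}/\mu}(a(\bull))$, proving (1).

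Part (2) is entirely parallel, but with columns playing the role of rows. Writing $\tilde{b}(l)_m = b(l)_m + y\, b(l)_{m-1}$ and using multilinearity of $\Det(b(j)_{\overline{\lambda}_i - \overline{\mu}_j})$ in the $l$-th column splits it into $S'_{\lambda/\mu}(b(\bull))$ plus $y$ times a determinant whose $l$-th column is $\bigl(b(l)_{\overline{\lambda}_i - \overline{\mu}_l - 1}\bigr)_i$. With $\tilde{\mu}_l = \mu_l + 1$, one checks that $\overline{\tilde{\mu}}_l = \overline{\mu}_l + 1$ while $\overline{\tilde{\mu}}_j = \overline{\mu}_j$ for $j \ne l$, so the entries of that column are exactly $b(l)_{\overline{\lambda}_i - \overline{\tilde{\mu}}_l}$, matching the $l$-th column of $S'_{\lambda/\tilde{\mu}}(b(\bull))$.

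There is no real obstacle here — the whole content is bookkeeping between the shifts $\lambda_k \mapsto \lambda_k - 1$, $\mu_l \mapsto \mu_l + 1$ and the corresponding index shifts on $\overline{\lambda}_k$, $\overline{\mu}_l$. The only mild subtlety to flag is the asymmetry between (1) and (2): in (1) the subtraction is on $\lambda$, while in (2) the addition is on $\mu$, which is consistent because $\overline{\lambda}_i - \overline{\mu}_j - 1$ can be realized either by lowering $\overline{\lambda}_i$ (matching multiplication of the row series $a(i)$) or by raising $\overline{\mu}_j$ (matching multiplication of the column series $b(j)$).
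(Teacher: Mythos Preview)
Your argument is correct and is precisely the approach the paper takes: the paper states only that the lemma ``follows from the linearity of determinants in rows and columns,'' and you have spelled out exactly that computation, tracking the index shifts correctly in both the row case (1) and the column case (2).
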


The proof of the theorem proceeds as follows.  We know the identity when $\kappa$ is a very small, or $\rho$ is a very large sequence, since then both sides vanish; so we can assume the identity holds for smaller $\kappa$ or larger $\rho$.  By the lemma and induction, the identity is true for $\kappa$ and $\rho$ and given $a(1), \ldots, a(n)$ and $b(1), \ldots, b(n)$ if and only if it is true when any $a(k)$ is replaced by $(1+x_{k,p}\,t)\,a(k)$ or any $b(l)$ is replaced by $(1+y_{l,q}\,t)\,b(l)$.  Starting from the case when all $a(k) = 1$ and $b(l) = 1$, we can repeat these replacements until we arrive at the case where each $a(k)_m$ (resp.~$b(l)_m$) is the $m^{\text{th}}$ elementary symmetric polynomial in a large number of independent variables.  Since these are algebraically independent variables, the identity is true in general.
\end{proof}

\subsection{Application to degeneracy loci}

Assume we have vector bundles $F$ and $E$, on a nonsingular variety, and a morphism $\phi \colon F \to E$,  and subbundles and quotient bundles with ranks indicated by subscripts:
\[
F_{q_s}  \hookrightarrow \dots \hookrightarrow F_{q_1} \hookrightarrow F \overset{\phi}{\longrightarrow} E \twoheadrightarrow E_{p_s} 
\twoheadrightarrow  \dots \twoheadrightarrow E_{p_1},
\]
Fix integers $k_1, \ldots, k_s$  satisfying
\[
0 < k_1 < \cdots < k_s  \; \text{ and } l_1 \geq \cdots \geq l_s > 0,  
\]
where $l_i = q_i - p_i + k_i$.  Define $\lambda$ to be the partition, of length $k_s$, with $\lambda_k = l_i$, for $i$ minimal such that $k_i \geq k$.
Consider a degeneracy locus $\Omega$ given by the rank conditions:
\[ 
\rank( F_{q_i} \to E_{p_i} ) \leq p_i - k_i , \;\; 1 \leq i \leq s .
\]  
From \cite{F} we have the formula for the class $[\Omega]$ of $\Omega$, assuming it has the expected codimension $|\lambda|$:  $[\Omega] = S_\lambda(c(1), \ldots, c(k_s))$,  where  $c(k) = c(E_{p_i} - F_{q_i})$, for $i$ minimal with $k_i \geq k$.  Then $c(k) = a(k) \cdot c$, where 
\[
  c = c(E - F) \quad \text{ and } \quad a(k) = c(F/F_{q_i})/c(\Ker(E \to E_{p_i}).
\]
Theorem \ref{mainA} gives the 
\begin{cor}  
The formula for the class of $\Omega$ is
\[
  [\Omega] \, = \,   \sum S_{\lambda/\mu}(a(1), \ldots, a(k_s)) \, S_\mu(c).  
\]
\end{cor}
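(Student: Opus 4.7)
The plan is to deduce the corollary directly from the Corollary to Theorem \ref{mainA}, using the degeneracy locus formula from \cite{F} as the starting point. By that formula, under the assumption that $\Omega$ has the expected codimension $|\lambda|$, one has
\[ [\Omega] = S_\lambda(c(1), \ldots, c(k_s)), \]
with $c(k) = c(E_{p_i} - F_{q_i})$ for the appropriate index $i$. So the task reduces to rewriting the right-hand side once the factorization $c(k) = a(k) \cdot c$ has been verified.

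First I would check that factorization, which is a routine Chern class calculation. For $i$ minimal with $k_i \geq k$, the short exact sequences
\[ 0 \to F_{q_i} \to F \to F/F_{q_i} \to 0 \quad \text{and} \quad 0 \to \Ker(E \to E_{p_i}) \to E \to E_{p_i} \to 0 \]
together with multiplicativity of the total Chern class give
\[ c(E_{p_i} - F_{q_i}) \, = \, \frac{c(E_{p_i})}{c(F_{q_i})} \, = \, c(E-F) \cdot \frac{c(F/F_{q_i})}{c(\Ker(E \to E_{p_i}))}, \]
which is exactly $a(k)\cdot c$.

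With this factorization in hand, $[\Omega] = S_\lambda(a(1)\cdot c, \ldots, a(k_s)\cdot c)$, and applying the Corollary to Theorem \ref{mainA} (in the unskewed case $\nu = \emptyset$) gives immediately
\[ [\Omega] = \sum_{\mu \subset \lambda} S_{\lambda/\mu}(a(1), \ldots, a(k_s)) \, S_\mu(c), \]
as required. There is really no obstacle here: all the substantive work sits in Theorem \ref{mainA}, while this corollary is a short bookkeeping step combining the Kempf-Laksov style formula with the multiplicativity of Chern classes and a direct substitution.
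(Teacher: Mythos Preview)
Your argument is correct and matches the paper's approach exactly: the paper simply records the factorization $c(k)=a(k)\cdot c$ (which you verify via multiplicativity of total Chern classes) and then states that Theorem~\ref{mainA} gives the corollary. If anything, you supply more detail on the Chern class computation than the paper does.
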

When all $E_{p_i} = E$, this gives an expansion of the Kempf-Laksov formula \cite{KL}.

\section{Tableau Formulas for Schur-like Determinants}\label{s.tableaux}

Our main goal is to give a tableau formula for the coefficients of the Schur polynomials $S_\mu(c)$ in the expansion of the enriched Schubert polynomial of a vexillary ($2 1 4 3$-avoiding) permutation of the integers.  We will also give a tableau formula for $3 2 1$-avoiding permutations.  Both of these are determinants, which can be expressed as generating functions of non-intersecting paths.

\subsection{Some classical tableau formulas}

There are cases where formulas for the Schur determinants, involving tableaux or non-intersecting paths are known, cf.~\cite{LS}, \cite{W}, \cite{CYY}.  We do not know the most general setting for such formulas, but record one we need here, from \cite{Mac1} \S  {\MakeUppercase{\romannumeral 1}}.5 Ex.~23: 
\begin{prop}\label{Atableaux}
Let $a = \prod_{i=1}^m \frac{1+y_j t}{1-x_i t} $.  For any partitions $\mu \subset \lambda$,
\[S_{\lambda/\mu}(a ) =  \sum_{T} (x,y)^T, \]
the sum over all tableaux $T$ on the skew shape $\lambda/\mu$  with entries 
$1' < 1 < 2' < 2 < \dots < m' < m$, weakly increasing along rows and down columns, with no $k$ repeated in a column and no $k'$ repeated in a row.
\end{prop}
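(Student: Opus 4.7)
The plan is to iterate the corollary of Theorem \ref{mainA} to peel off the alphabet letter by letter, reduce to Pieri-type identities for the one-variable pieces, and then recognize the resulting sum over partition chains as the sum over super-tableaux.

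First I would factor the Chern series as
\[
 a \,=\, h_m\,e_m\,h_{m-1}\,e_{m-1} \cdots h_1\,e_1, \qquad h_k = \tfrac{1}{1-x_k t}, \quad e_k = 1+y_k t,
\]
and apply the corollary to Theorem \ref{mainA} a total of $2m-1$ times, splitting off one factor from the right at each step. This produces
\[
 S_{\lambda/\mu}(a) \;=\; \sum \; \prod_{j=1}^{2m} S_{\zeta_{j-1}/\zeta_j}(f_j),
\]
summed over chains $\mu = \zeta_{2m} \subset \zeta_{2m-1} \subset \cdots \subset \zeta_1 \subset \zeta_0 = \lambda$ of partitions, where $(f_1, f_2, \ldots, f_{2m}) = (h_m, e_m, h_{m-1}, e_{m-1}, \ldots, h_1, e_1)$.

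Next, I would verify the single-variable Pieri identities $S_{\nu/\sigma}(h_k) = x_k^{|\nu/\sigma|}$ if $\nu/\sigma$ is a horizontal strip (and $0$ otherwise), and $S_{\nu/\sigma}(e_k) = y_k^{|\nu/\sigma|}$ if $\nu/\sigma$ is a vertical strip (and $0$ otherwise), both by reducing the relevant determinants to triangular form. This forces the surviving chains to be exactly those whose successive skew shapes alternate horizontal strips (at the $h_k$ slots) and vertical strips (at the $e_k$ slots).

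Finally, I would exhibit a weight-preserving bijection between the surviving chains and super-tableaux of shape $\lambda/\mu$: the cells of the horizontal strip $\zeta_{j-1}/\zeta_j$ at step $f_j = h_k$ receive entry $k$, and the cells of the vertical strip at step $f_j = e_k$ receive entry $k'$. Because the chain descends from $\lambda$ down to $\mu$ in the order $h_m, e_m, \ldots, h_1, e_1$, the resulting filling is weakly increasing along rows and down columns in the alphabet $1' < 1 < \cdots < m' < m$; the horizontal-strip conditions translate to \textbf{no $k$ repeated in a column}, and the vertical-strip conditions translate to \textbf{no $k'$ repeated in a row}. The main obstacle is the bookkeeping in this last step: one must check that the cells labeled $k$ (resp.\ $k'$) in any valid super-tableau form a horizontal (resp.\ vertical) strip inside the appropriate sub-shape, so that every tableau arises from exactly one chain; once this is clear, the two sides match term by term.
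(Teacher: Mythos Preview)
Your argument is correct. The paper, however, does not prove this proposition at all: it simply records the statement as a known fact, citing Macdonald \cite[\S I.5, Ex.~23]{Mac1}. So there is no ``paper's own proof'' to compare against beyond that citation.

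Your route is nonetheless worth noting, because it turns the proposition into an internal consequence of the paper's own Corollary to Theorem~\ref{mainA} (the identity $S_{\lambda/\nu}(a\cdot c)=\sum_\mu S_{\lambda/\mu}(a)\,S_{\mu/\nu}(c)$) rather than an external import. Iterating that identity to peel off the $2m$ one-variable factors, then invoking the elementary evaluations of $S_{\nu/\sigma}(1/(1-x_k t))$ and $S_{\nu/\sigma}(1+y_k t)$ as horizontal- and vertical-strip indicators, and finally reassembling the chain of strips into a super-tableau, is exactly the standard mechanism behind Macdonald's exercise. One small wording slip: you say you split off factors ``from the right,'' but your displayed chain has $f_1=h_m$ sitting at the outer shape $\lambda$, which is what you get by splitting from the \emph{left}; either direction works and yields the same final sum, so this does not affect correctness. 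The bookkeeping you flag in the last step is routine: since the entries are weakly increasing in both directions, the set of boxes carrying the current maximal symbol is automatically a skew shape at the outer boundary, and the column/row restriction on that symbol is precisely the horizontal/vertical strip condition.
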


\noindent Here the monomial $(x,y)^T$ is $\prod x_k^{\# \{k \in T\}} \prod y_k^{\#\{ k' \in T\}}$.  For example, the tableau
\[
T = \begin{ytableau}
1'  & 1  & 2' & 3 \\ 
1'  & 2  & 2   \\ 
1 \\
\end{ytableau} 
\]
contributes the monomial $(x,y)^T = x_1^2 x_2^2 x_3 y_1^2 y_2$ to $S_{(4,3,1)}(a)$.

\begin{cor}\label{c.border}
Let $a = \frac{1+y t}{1-x t} $.  For any partition $\lambda$, 
\[
 S_\lambda(a \cdot c) = \sum x^{v(\lambda/\mu)} \,  y^{h(\lambda/\mu)}  \, (x+y)^{k(\lambda/\mu)} \, S_\mu(c),
\]
the sum over all $\mu$ obtained from $\lambda$ by removing a border 
strip (from its Young diagram), with $v(\lambda/\mu)$ the number of vertical lines between border boxes, $h(\lambda/\mu)$ the number of horizontal lines 
between border boxes,  and $k(\lambda/\mu)$ the number of connected components in the border strip.
\end{cor}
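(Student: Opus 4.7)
The plan is to combine two ingredients already established in the text. The first is the expansion
\[
S_\lambda(a\cdot c) \;=\; \sum_{\mu \subset \lambda} S_{\lambda/\mu}(a)\,S_\mu(c),
\]
obtained from the Corollary following Theorem \ref{mainA} by taking $\nu = \emptyset$ and all $a(k) = a$. The second is Proposition \ref{Atableaux} with $m=1$, which writes $S_{\lambda/\mu}(a) = \sum_T (x,y)^T$, the sum over fillings $T$ of $\lambda/\mu$ by entries from $\{1',1\}$, weakly increasing along rows and down columns, with $1$ appearing at most once in each column and $1'$ at most once in each row. It thus suffices to evaluate each $S_{\lambda/\mu}(a)$ combinatorially and match it against $x^{v} y^{h} (x+y)^{k}$.

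The next step is to extract local forcing rules from the tableau conditions. In any horizontally adjacent pair of cells of $\lambda/\mu$ the east cell must be $1$ (else both would be $1'$), and in any vertically adjacent pair the north cell must be $1'$ (else both would be $1$). These rules collide on any cell that has both a western and a southern neighbor in $\lambda/\mu$, since such a cell would be simultaneously $1$ and $1'$. Using $\mu_r \geq \mu_{r+1}$, one checks that such a conflicting cell is present precisely at the top-right corner of any $2 \times 2$ block inside $\lambda/\mu$; therefore $S_{\lambda/\mu}(a) = 0$ whenever $\lambda/\mu$ contains such a block, and otherwise $\lambda/\mu$ is a disjoint union of connected border strips.

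For each connected component $B$ one can traverse the cells of $B$ from the top-right endpoint to the bottom-left endpoint, each step moving one cell west or south; the local rules then determine the value of every cell along the way, contributing a factor $x$ at each westward step and $y$ at each southward step. Only the terminal bottom-left cell of $B$ has neither a western nor a southern neighbor in $B$, so it remains free to be either $1$ or $1'$, contributing a factor $x+y$. A component thus carries the weight $x^{v(B)}y^{h(B)}(x+y)$; multiplying over the $k(\lambda/\mu)$ components gives $x^{v(\lambda/\mu)}y^{h(\lambda/\mu)}(x+y)^{k(\lambda/\mu)}$, and substitution into the first display yields the stated formula. The main technical point is the cell-by-cell forcing analysis, and in particular the observation that it is exactly the bottom-left endpoint of each border strip that escapes being pinned down.
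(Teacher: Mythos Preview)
Your proof is correct and is precisely the argument the paper has in mind: the corollary is placed immediately after Proposition~\ref{Atableaux} and the expansion formula from \S1, and your proof simply spells out how the $m=1$ case of the tableau formula collapses to the border-strip weight $x^{v}y^{h}(x+y)^{k}$. The forcing analysis (east cell of a horizontal pair is $1$, north cell of a vertical pair is $1'$, hence no $2\times 2$ block survives and only the bottom-left endpoint of each ribbon is free) is exactly the computation the paper leaves implicit.
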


For example, since the border strip $(4,3,1)/(2,1)$ has two vertical lines, one horizontal line, and two connected components, the coefficient of $S_{(2,1)}(c)$ in $S_{(4,3,1)}(a\cdot c)$ is $x^2y(x+y)^2$.


\begin{center}
\pspicture(0,0)(60,60)
\graybox(0,40)
\graybox(10,40)
\blankbox(20,40)
\blankbox(30,40)

\graybox(0,30)
\blankbox(10,30)
\blankbox(20,30)

\blankbox(0,20)

%

\endpspicture
\end{center}

\subsection{The basic tableau formula}\label{ss.tableau}  

The goal of this section is to prove a tableau formula for certain \textit{doubly flagged} Schur polynomials $S_{\lambda/\mu}(a(\bull))$, where the denominators of $a(i)$ get larger, and the numerators get smaller, as $i$ increases.  
We are given two sequences $p_\bull$ and $q_\bull$ of nonnegative integers, each of length $n$, with $p_\bull$ weakly increasing, $q_\bull$ weakly decreasing, with no repeats, i.e., $p_i \leq p_{i+1}$ and $q_i \geq q_{i+1}$ for all $1 \leq i < n$, and at least one of these inequalities is strict for each such $i$.

We set 
\[  a(i) = \frac{\prod_{b=1}^{q_i}(1+y_b\,t)}{\prod_{a=1}^{p_i}(1 - x_a\,t)} .\]
We have a sequence $\lambda$ of length $n$, which satisfies an equation
\[\lambda_i = q_i - p_i + i + t \text{ for some constant } t .\]
The conditions on $p_\bull$ and $a_\bull$ make this sequence weakly decreasing, so it is a partition if $t$ is sufficiently large.  We will give a formula for $S_{\lambda/\mu}(a(\bull))$ for any sequence $\mu \subset \lambda$, i.e., $\mu$ is a weakly decreasing sequence of length $n$ with $\mu_i \leq \lambda_i$ for all $i$.  The skew Young diagram of $\lambda/\mu$ may be obtained by adding a constant to all entries, so they become partitions.  (Skew diagrams that are horizontal translations of each other are identified here; vertical translations are not allowed, as the numbering of their rows is important.)

A \textit{tableau} on $\lambda/\mu$ is a filling of the boxes of its skew Young diagram by plain positive integers $k$, primed positive integers $l'$, or pairs $(k,l')$ of such integers.  In the $i^\text{th}$ row, any $k$ appearing must have $1 \leq k \leq p_i$, and any $l'$ appearing must have $1\leq l \leq q_i$.  The rules for ordering in rows and columns are modifications of the usual rules for plain and primed alphabets.  The plain entries are weakly increasing along rows, strictly increasing down columns, and the primed entries are strictly increasing along rows, weakly increasing down columns.  Any box either right of or below a box containing a plain (resp.~primed) entry must contain a plain (resp.~primed) entry.  In addition, any entry following an entry $(k,l')$ in a row must have the form $(k+m,(l+m+1)')$ for some $m$, and any entry under it must have the form $(k+m+1,(l+m)')$ for some $m$ (with $m \geq 0$ in each case).

\begin{thm}\label{t.tableau}  $S_{\lambda/\mu}(a(\bull)) = \Det(a(k)_{\lambda_k - \mu_l + l - k})_{1 \leq k, l \leq n}$ is the sum of the weights of all tableaux on $\lambda/\mu$, where the weight of a tableau is obtained by setting each $k$ to $x_k$, $l'$ to $y_l$, and $(k,l')$ to $x_k+y_l$.
\end{thm}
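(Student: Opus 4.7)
My plan is to prove Theorem \ref{t.tableau} via the Lindström--Gessel--Viennot (LGV) lemma, interpreting the determinant as a weighted sum over families of non-intersecting lattice paths and bijecting these with the decorated tableaux on $\lambda/\mu$. First I would expand $a(k)_m$ as the generating function of length-$m$ sequences of ``steps'' in the alphabet consisting of primed letters $1',\dots,q_k'$ (of weights $y_l$) and plain letters $1,\dots,p_k$ (of weights $x_l$), weakly increasing in the order $1'<1<2'<2<\cdots$, with primes strict and plains weak. Placing source points $A_l = (-\mu_l + l,\,0)$ and sink points $B_k = (\lambda_k-k,\,k)$, the entry $a(k)_{\lambda_k-\mu_l+l-k}$ becomes the weighted count of monotone lattice paths from $A_l$ to $B_k$ whose step labels come from the row-$k$ alphabet. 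Applying LGV and the usual sign-reversing involution on crossings would reduce $S_{\lambda/\mu}(a(\bull))$ to the generating function of non-intersecting $n$-tuples of such paths.

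Next I would read off a tableau on $\lambda/\mu$ from each non-intersecting family: the label sequence of the $k$th path becomes the $k$th row. The flag conditions $k\le p_i$ and $l\le q_i$ in row $i$ fall out directly from the restricted row-$i$ alphabet, and the row/column ordering conventions on plain and primed entries (weak vs.\ strict increase, swapped in rows and columns) follow from the step order together with non-intersection, exactly as in the proof of Proposition \ref{Atableaux}. What is new here, and what I expect to be the main obstacle, is the presence of pair entries $(k,l')$ of weight $x_k+y_l$. These should correspond to lattice configurations in which two adjacent paths are forced to share a vertex in a way the standard involution cannot dispose of; the weight $x_k+y_l$ records the two legitimate local resolutions (one contributing $x_k$, the other $y_l$), and the propagation conditions --- successor in the row of the form $(k+m,(l+m+1)')$ and in the column of the form $(k+m+1,(l+m)')$ --- should be precisely the constraints that keep the two paths non-intersecting as they travel past the coincidence. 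Pinning down this local analysis and the resulting chains of pair entries is the technical heart of the argument.

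A backup plan, should the LGV bookkeeping become unwieldy, is an inductive proof via a flagged refinement of Corollary \ref{c.border}: peel the outer border strip of $\lambda/\mu$ one step at a time, with the $(x+y)$ factors in the corollary accounting for pair-entry boxes at the heads of connected components of the strip, and the $x$, $y$ factors contributing the vertical and horizontal edge statistics. This route requires upgrading the unflagged border-strip expansion to the doubly-flagged setting and tracking how the sequences $p_\bull$ and $q_\bull$ evolve through each peeling, but avoids the sign-cancellation step of LGV entirely; either approach ultimately reduces the theorem to verifying a one-row identity together with a compatibility of the ordering rules with the peeling or with non-intersection, respectively.
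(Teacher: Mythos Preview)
Your overall plan—use LGV on a planar network and read tableaux off non-intersecting path families—is exactly what the paper does. But there is a genuine gap in how you set up the network, and a misconception about where the pair entries $(k,l')$ come from.

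In your network the step labels for a path from $A_l$ to $B_k$ are drawn from the ``row-$k$ alphabet'', so the weight of an edge depends on which sink the path is headed toward. This breaks the LGV sign-reversing involution: when you swap tails of two intersecting paths, the labels on the swapped portions are now governed by the wrong alphabets, and the weights no longer match up. The cancellation does not go through. The paper resolves this not by repairing the involution but by designing the network so that the weight of each vertical edge depends only on its \emph{position} $(a,b)$: edges with $a\le 0$ carry $y_{a+b+1}$, edges with $a\ge 1$ and $a+b<0$ carry $x_a$, and edges in an intermediate diagonal strip (those with $1\le a\le p_i$ and $0\le a+b\le q_i-1$ for some $i$) carry the combined weight $x_a+y_{a+b+1}$. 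The hypothesis $\lambda_i=q_i-p_i+i+t$ is exactly what pins the target $T_i$ to the height that forces every path ending at $T_i$ through the region implementing the row-$i$ alphabet. With position-only edge weights, LGV applies without any subtlety.

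Consequently your reading of the pair entries is off. A box labeled $(k,l')$ does not record a forced vertex-coincidence between two adjacent paths; it records a single vertical step of a \emph{single} path through the diagonal region where the edge weight is already $x_k+y_l$. The propagation rules—successor $(k+m,(l+m+1)')$ along a row and $(k+m+1,(l+m)')$ down a column—are then just the observation that the next vertical step of that same path, respectively of the adjacent non-intersecting path, lands on an edge whose first coordinate and diagonal index $a+b$ shift as indicated. No ``two local resolutions'' bookkeeping is needed; once the edge weights are placed correctly, the tableau rules drop out immediately. Your backup border-strip peeling could perhaps be pushed through, but it is neither the paper's route nor shorter: the entire content of the argument is the choice of edge weights, after which the proof is a paragraph.
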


We will prove this by showing that the determinant and the tableau formula are given by non-intersecting paths on a directed graph.   We may assume that $t = 0$, since adding the same constant to each $\lambda_i$ and $\mu_i$ does not change the formula. 
Vertices will be at lattice points in the plane, with $(a,b)$  denoting the coordinates.  There will be $n$ \textit{source} points $S_1, \ldots, S_n$  on a vertical line at the left, with the second coordinate of $S_i$ equal to  $\mu_i - i$.  There will be $n$ \textit{target} points $T_1, \ldots, T_n$  on a vertical line at the right, with the second coordinate of $T_i$ equal to $\lambda_i - i $.  

There will be horizontal arrows moving one unit to the right, with weight 1, at any lattice point.   There will be some vertical arrows, moving one unit up, which will have weights $x_k$ or $y_l$ or $x_k+y_l$ for $k$ and $l$ positive.  If the arrow points up from the vertex $(a,b)$, it will get a nonzero weight by the following rules:

\begin{enumerate}
\item $y_{a+b+1}$ if $a \leq 0$, and $a+b \geq 0$;

\item $x_a + y_{a+b+1}$ if $a+b \geq 0$ and $1 \leq a \leq p_i$ and $ a+b \leq q_i $ for some $1 \leq i \leq n$;

\item $x_a$ if $a \geq 1$ and $a+b < 0$.
\end{enumerate} 

Figure 1 shows the case $p_\bull = (0,0,1,3,3,5,6)$ and $q_\bull = (9,7,6,4,2,0,0)$, so $\lambda = (10,9,8,5,4,1,1)$; we take $\mu = (6,4,4,1,0,0,0)$.  A non-intersecting path is shown, and the corresponding tableau is indicated.

\begin{figure}
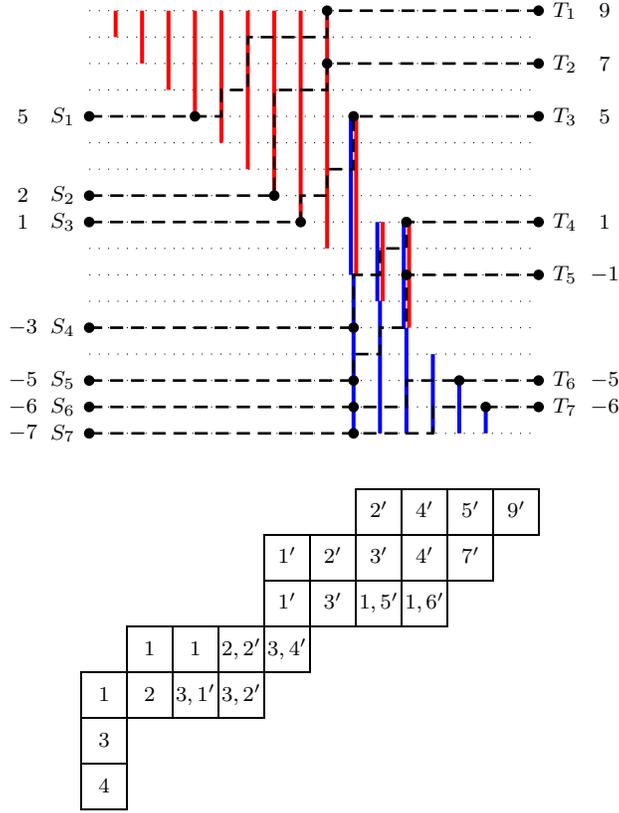


\pspicture(-10,0)(200,200)

\def\cA{red}
\def\cB{blue}
\def\cC{black}

\psline[linestyle=dotted](10,180)(180,180)
\psline[linestyle=dotted](10,170)(180,170)
\psline[linestyle=dotted](10,160)(180,160)
\psline[linestyle=dotted](10,150)(180,150)
\psline[linestyle=dotted](10,140)(180,140)
\psline[linestyle=dotted](10,130)(180,130)
\psline[linestyle=dotted](10,120)(180,120)
\psline[linestyle=dotted](10,110)(180,110)
\psline[linestyle=dotted](10,100)(180,100)
\psline[linestyle=dotted](10,90)(180,90)
\psline[linestyle=dotted](10,80)(180,80)
\psline[linestyle=dotted](10,70)(180,70)
\psline[linestyle=dotted](10,60)(180,60)
\psline[linestyle=dotted](10,50)(180,50)
\psline[linestyle=dotted](10,40)(180,40)
\psline[linestyle=dotted](10,30)(180,30)
\psline[linestyle=dotted](10,20)(180,20)

\psline[linecolor=\cA,linewidth=1.5](20,170)(20,180)
\psline[linecolor=\cA,linewidth=1.5](30,160)(30,180)
\psline[linecolor=\cA,linewidth=1.5](40,150)(40,180)
\psline[linecolor=\cA,linewidth=1.5](50,140)(50,180)
\psline[linecolor=\cA,linewidth=1.5](60,130)(60,180)
\psline[linecolor=\cA,linewidth=1.5](70,120)(70,180)
\psline[linecolor=\cA,linewidth=1.5](80,110)(80,180)
\psline[linecolor=\cA,linewidth=1.5](90,100)(90,180)
\psline[linecolor=\cA,linewidth=1.5](100,90)(100,180)

\psline[linecolor=\cA,linewidth=1.5](111,80)(111,140)
\psline[linecolor=\cB,linewidth=1.5](109,80)(109,140)

\psline[linecolor=\cA,linewidth=1.5](121,70)(121,100)
\psline[linecolor=\cB,linewidth=1.5](119,70)(119,100)

\psline[linecolor=\cA,linewidth=1.5](131,60)(131,100)
\psline[linecolor=\cB,linewidth=1.5](129,60)(129,100)

\psline[linecolor=\cB,linewidth=1.5](110,80)(110,20)
\psline[linecolor=\cB,linewidth=1.5](120,70)(120,20)
\psline[linecolor=\cB,linewidth=1.5](130,60)(130,20)

\psline[linecolor=\cB,linewidth=1.5](140,50)(140,20)
\psline[linecolor=\cB,linewidth=1.5](150,40)(150,20)
\psline[linecolor=\cB,linewidth=1.5](160,30)(160,20)

\pscircle*(10,20){2}
\pscircle*(10,30){2}
\pscircle*(10,40){2}
\pscircle*(10,60){2}
\pscircle*(10,100){2}
\pscircle*(10,110){2}
\pscircle*(10,140){2}

\pscircle*(110,20){2}
\pscircle*(110,30){2}
\pscircle*(110,40){2}
\pscircle*(110,60){2}
\pscircle*(90,100){2}
\pscircle*(80,110){2}
\pscircle*(50,140){2}

\pscircle*(160,30){2}
\pscircle*(150,40){2}
\pscircle*(130,80){2}
\pscircle*(130,100){2}
\pscircle*(110,140){2}
\pscircle*(100,160){2}
\pscircle*(100,180){2}

\pscircle*(180,30){2}
\pscircle*(180,40){2}
\pscircle*(180,80){2}
\pscircle*(180,100){2}
\pscircle*(180,140){2}
\pscircle*(180,160){2}
\pscircle*(180,180){2}

{\tiny
\rput(-15,20){$-7$}
\rput(-15,30){$-6$}
\rput(-15,40){$-5$}
\rput(-15,60){$-3$}
\rput(-15,100){$1$}
\rput(-15,110){$2$}
\rput(-15,140){$5$}
}

{\tiny
\rput(0,20){$S_7$}
\rput(0,30){$S_6$}
\rput(0,40){$S_5$}
\rput(0,60){$S_4$}
\rput(0,100){$S_3$}
\rput(0,110){$S_2$}
\rput(0,140){$S_1$}
}

{\tiny
\rput(190,30){$T_7$}
\rput(190,40){$T_6$}
\rput(190,80){$T_5$}
\rput(190,100){$T_4$}
\rput(190,140){$T_3$}
\rput(190,160){$T_2$}
\rput(190,180){$T_1$}
}

{\tiny
\rput(205,30){$-6$}
\rput(205,40){$-5$}
\rput(205,80){$-1$}
\rput(205,100){$1$}
\rput(205,140){$5$}
\rput(205,160){$7$}
\rput(205,180){$9$}
}

\psline[linewidth=1,linestyle=dashed,linecolor=\cC](10,20)(110,20)(140,20)(140,30)(160,30)(180,30)
\psline[linewidth=1,linestyle=dashed,linecolor=\cC](10,30)(110,30)(130,30)(130,40)(150,40)(180,40)
\psline[linewidth=1,linestyle=dashed,linecolor=\cC](10,40)(110,40)(110,50)(120,50)(120,60)(130,60)(130,80)(180,80)
\psline[linewidth=1,linestyle=dashed,linecolor=\cC](10,60)(110,60)(110,80)(120,80)(120,90)(130,90)(130,100)(180,100)
\psline[linewidth=1,linestyle=dashed,linecolor=\cC](10,100)(90,100)(90,110)(100,110)(100,120)(110,120)(110,140)(180,140)
\psline[linewidth=1,linestyle=dashed,linecolor=\cC](10,110)(80,110)(80,150)(100,150)(100,160)(180,160)
\psline[linewidth=1,linestyle=dashed,linecolor=\cC](10,140)(50,140)(60,140)(60,150)(70,150)(70,170)(100,170)(100,180)(180,180)

\endpspicture

{\tiny
$
\ytableausetup{boxsize=2em}
\begin{ytableau}
\none  & \none  &\none  &\none  &\none  &\none  & 2'  & 4' & 5' & 9' \\ 
\none  & \none  &\none  &\none & 1' & 2' & 3' & 4' & 7'  \\ 
\none  & \none  &\none  &\none & 1' & 3' & 1,5' & 1,6' \\
\none & 1 & 1 & 2,2' & 3,4' \\
1 & 2 & 3,1' & 3,2' \\
3 \\
4
\end{ytableau} 
$
}

\caption{A family of non-intersecting paths and corresponding tableau.  \label{f.network} } 

\end{figure}

For example, the  weight of the third line of the tableau or the third path is 
$y_1y_3(x_1+y_5)(x_1+y_6) $.

The claim is that for each $1 \leq i \leq n$ and each integer $m$, the flow obtained from the directed graph by adding the products of all weights for all paths from $m$ steps below $T_i$ on the left, to the target $T_i$ on the right, is equal to $a(i)_m$, which is a sum of products of elementary symmetric polynomials in $y$ variables times complete symmetric polynomials in $x$ variables.  This is easily verified, by considering each of the three cases where $p_i = 0$, or $q_i = 0$, or $p_i > 0$ and $q_i > 0$.  By the general non-intersecting path Lindstr\"om-Gessel-Viennot result \cite{GV}, this proves  that the sum of the products of all weights, over all non-intersecting paths from the sources $S_i$ to the targets $T_i$ is the required determinant.  And the weights along the $i^{\text{th}}$ path correspond to the entries of the $i^{\text{th}}$ row of the corresponding tableau.

\begin{rmk}
The simplest --- but useful --- case of the theorem is when either $p_i = 0$ or $q_i = 0$ for every $i$.  Then the first rows, where $p_i = 0$, have only primed entries, and the last rows, where $q_i = 0$, have only plain entries; there are no mixed entries $(k,l')$.
\end{rmk}

\begin{rmk} These constructions give positive formulas in some greater generality.  One can take any partition $\lambda$ of 
length $n$ such that
\[  q_{i+1} - p_{i+1} + i + 1 + t \, < \, \lambda_i \, \leq \, q_i - p_i + i + t\]
for $1 \leq i < n$.  The same argument shows that the corresponding determinant will be given by the generating function of the directed graph.  (In fact, $\lambda$ and $\mu$ can be any weakly decreasing sequences of $n$ integers, not necessarily nonnegative.)
\end{rmk}

There is a \emph{dual tableau formula}, where the Chern series depend on the columns instead of the rows.  This time $p_\bull$ is a weakly \textit{decreasing} sequence, and $q_\bull$ is a weakly \textit{increasing} sequence, of $n$ nonnegative integers, with at least one changing at each step.  We are given weakly decreasing sequences $\lambda \supset \mu$ of length $n$, but now $\mu_i = p_i - q_i + i + t$  for some integer $t$.  Set $b(j) = \prod_{b=1}^{q_j}(1+y_b\,t) \, / \, \prod_{a=1}^{p_j}(1 - x_a\,t)$.  We will give a tableau formula for 
\[S_{\lambda/\mu}'(b(\bull)) = \Det(b(j)_{\lambda_i - \mu_j + j - i}).\]
A \textit{dual tableau} satisfies all the conditions  as before (so plain entries in the $i^\text{th}$ row are in $[1,p_i]$ and primed entries are primes of integers in $[1,q_i]$), but all the ordering is \textit{reversed}: the plain/primed entries are weakly/strictly decreasing in rows, strictly/weakly decreasing in columns, and the relations between adjacent pairs is the opposite of the one for tableaux.  The weight of a tableau assigns $x_k$ to $k$, $y_l$ to $l'$, and $x_k+y_l$ to $(k,l')$ as before.

\begin{cor}\label{dualtableau} $S_{\lambda/\mu}'(b(\bull))$  is the sum of the weights of all dual tableaux on $\lambda/\mu$.
\end{cor}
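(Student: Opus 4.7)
The plan is to reduce Corollary~\ref{dualtableau} to Theorem~\ref{t.tableau} by a $180^\circ$ rotation of the skew diagram. Fix a constant $C \geq \lambda_1$ and introduce tilde'd data by setting
\[ \tilde{a}(i) := b(n+1-i), \quad \tilde{p}_i := p_{n+1-i}, \quad \tilde{q}_i := q_{n+1-i}, \quad \tilde{\lambda}_i := C - \mu_{n+1-i}, \quad \tilde{\mu}_i := C - \lambda_{n+1-i}. \]
Since $p_\bull$ is weakly decreasing and $q_\bull$ is weakly increasing, $\tilde{p}_\bull$ is weakly increasing and $\tilde{q}_\bull$ is weakly decreasing; the sequences $\tilde{\lambda} \supset \tilde{\mu}$ are both weakly decreasing; and a direct calculation using $\mu_i = p_i - q_i + i + t$ shows $\tilde{\lambda}_i = \tilde{q}_i - \tilde{p}_i + i + \tilde{t}$ with $\tilde{t} = C - n - 1 - t$. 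Thus the tilde'd data satisfies the hypotheses of Theorem~\ref{t.tableau}.

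Transposing the defining matrix of $S'_{\lambda/\mu}(b(\bull))$ and then applying the index reversal $(i,j) \mapsto (n+1-i, n+1-j)$ preserves the determinant (each reversal contributes a sign $(-1)^{\binom{n}{2}}$, and there are two of them), yielding
\[ S'_{\lambda/\mu}(b(\bull)) \;=\; \Det\bigl(b(j)_{\overline{\lambda}_i - \overline{\mu}_j}\bigr) \;=\; \Det\bigl(\tilde{a}(i)_{\overline{\tilde{\lambda}}_i - \overline{\tilde{\mu}}_j}\bigr) \;=\; S_{\tilde{\lambda}/\tilde{\mu}}(\tilde{a}(\bull)). \]
Theorem~\ref{t.tableau} then expresses the right-hand side as a weighted sum over ordinary tableaux on $\tilde{\lambda}/\tilde{\mu}$.

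The final step is to observe that the $180^\circ$ rotation of $\lambda/\mu$ inside an $n \times C$ rectangle is precisely the shape $\tilde{\lambda}/\tilde{\mu}$, and this rotation furnishes a weight-preserving bijection between dual tableaux on $\lambda/\mu$ and ordinary tableaux on $\tilde{\lambda}/\tilde{\mu}$. Reversing the reading direction along both rows and columns swaps every ``weakly/strictly increasing'' rule into its ``weakly/strictly decreasing'' counterpart, and it carries the bounds $1 \leq k \leq p_j$, $1 \leq l \leq q_j$ on column $j$ of the dual tableau to the bounds $1 \leq k \leq \tilde{p}_i$, $1 \leq l \leq \tilde{q}_i$ on row $i = n+1-j$ of an ordinary tableau. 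The box-weights $x_k$, $y_l$, $x_k + y_l$ depend only on the entry, so the bijection preserves weights and the proof is complete.

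The step that requires the most care is the compatibility of the mixed-entry rule $(k,l')$ with the rotation. In an ordinary tableau, the successor of $(k,l')$ to its right must have the form $(k+m, (l+m+1)')$ and below it the form $(k+m+1, (l+m)')$; under the reversal of reading direction, these become conditions on the \emph{predecessors}, which after relabeling translate precisely into the ``opposite'' relational form demanded by the dual-tableau convention. Once this final bookkeeping is verified, the bijection is established.
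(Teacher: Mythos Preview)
Your approach is essentially the same as the paper's: reduce to Theorem~\ref{t.tableau} by the substitutions $\tilde a(i)=b(n+1-i)$, $\tilde p_i=p_{n+1-i}$, $\tilde q_i=q_{n+1-i}$, and a $180^\circ$ rotation of the skew shape. The paper takes $C=0$ (allowing negative entries in $\tilde\lambda,\tilde\mu$, which is permitted by the remark following Theorem~\ref{t.tableau}); your use of a general $C$ is a harmless horizontal translation.

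One slip to fix: in your penultimate paragraph you write that the bounds $1\le k\le p_j$, $1\le l\le q_j$ live on \emph{column} $j$ of the dual tableau. By the paper's definition the bounds on a dual tableau are row-dependent (``plain entries in the $i^{\text{th}}$ row are in $[1,p_i]$ \ldots''), just as for ordinary tableaux. Under the $180^\circ$ rotation, row $i$ of $\tilde\lambda/\tilde\mu$ becomes row $n+1-i$ of $\lambda/\mu$, so the row bounds $(\tilde p_i,\tilde q_i)$ on the ordinary tableau become the row bounds $(p_{n+1-i},q_{n+1-i})$ on row $n+1-i$ of the dual tableau, as required. With ``column $j$'' corrected to ``row $i$'', your bijection and the rest of the argument are fine.
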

\begin{proof}  Set $\tilde{\lambda}_i = -\mu_{n+1-i}$, $\tilde{\mu}_i = - \lambda_{n_1-i}$, 
$\tilde{p}_i = p_{n+1-i}$,  $\tilde{q}_i = q_{n+1-i}$, and $\tilde{a}(i) = b(n+1-i)$.  Then $S_{\lambda/\mu}'(b(\bull)) = S_{\tilde{\lambda}/\tilde{\mu}}(\tilde{a}(\bull))$.  The skew diagram for $\lambda/\mu$ is the $180^\circ$ rotation of that for $\tilde{\lambda}/\tilde{\mu}$, and the dual tableaux for $\lambda/\mu$ are the $180^\circ$ rotations of the tableau for $\tilde{\lambda}/\tilde{\mu}$.  So the tableau formula for $S_{\tilde{\lambda}/\tilde{\mu}}(\tilde{a}(\bull))$ implies the dual tableau formula for $S_{\lambda/\mu}'(b(\bull))$.
\end{proof}

\subsection{Vexillary permutations of the integers}

A permutation here is a permutation of the integers that fixes all but a finite number of integers; the group of these is denoted $\Sgp_\Z$.  A vexillary permutation is one that avoids the pattern $2 1 4 3$.

A vexillary permutation comes from a triple of integers.  It is convenient to weaken the notion of a triple defined in \cite{AF2}, to allow weak inequalities in the condition to be a triple.  A (weak) \emph{triple} $\tau = (k_\bull, p_\bull, q_\bull)$ of length $s$ has 
\[
0 <  k_1  < k_2 < \cdots  < k_s , \;\;\; p_1 \leq p_2  \leq \cdots  \leq p_s ,  \; \;\; q_1 \geq q_2\geq \cdots \geq q_s ,
\]  
determining integers $l_i$ by the formula $l_i = q_i - p_i + k_i$.  The condition to be a triple is that 
\[
  l_1 \geq  l_2 \geq \cdots \geq l_s  > 0.  \tag{*}  
\]
In this paper a triple where the inequalities in (*) are strict will be called a \emph{strong triple}.  A triple $\tau$ determines a partition $\lambda = \lambda(\tau)$ by setting $\lambda_k = l_i$ for $i$ minimal such that $k_i \geq k$.  And it determines a permutation $w(\tau)$, which is the permutation of minimal length such that for $1 \leq i \leq s$,
$ \# \{a \leq p_i \mid w(a) > q_i\} = k_i$.
A (weak) triple defines the same partition and permutation as the strong triple obtained by omitting any $(k_i,p_i,q_i)$ for which $l_i = l_{i+1}$ (and re-indexing the remaining $(k_j,p_j,q_j)$).  When $\tau$ is strong, the corners of the Young diagram of its partition are at the positions $(k_i,l_i)$, $1 \leq i \leq s$.   We set $n = k_s$, the length of the partition $\lambda$.  Every vexillary permutation comes from a unique strong triple.

A triple is called \emph{complete} if $k_i = i$ for $1 \leq i \leq s = n$.  Any triple can be extended, usually in more than one way, to a complete triple.  Therefore it suffices to consider complete triples, and we will assume from now on that $\tau$ is complete.  Note that in this case $\lambda_i = l_i = q_i - p_i + i$, and $a(i) = a(p_i,q_i)$ for $1 \leq i \leq n$.  A complete triple of length $n$ is determined by the two sequences $p_\bull$ and $q_\bull$ of length $n$, the first weakly increasing, the second weakly decreasing, and the condition (*) is that for every $i < n$, either $p_i < p_{i+1}$ or $q_i > q_{i+1}$ (or both), and $p_n < q_n + n$.  We may write $\tau = (p_\bull,q_\bull)$ for a complete triple, with the understanding that $k_i = i$.

A triple determines Chern series $a(k) = a(p_i,q_i)$ for $i$ minimal such that $k_i \geq k$, where
\[
 a(p,q) \, =  \, \frac{\prod_{i=p+1}^0(1-x_i\,t) \cdot \prod_{j=1}^q (1+y_j\,t)}{\prod_{i=1}^p(1-x_i\,t) \cdot \prod_{j=q+1}^0 (1+y_j\,t)}. 
\]
The enriched Schubert polynomial $\mathbf{S}_{w}$ (cf.~\cite{AF2}) is equal to the determinant
\[ \P_\tau(c,x,y) = S_\lambda(a(\bull)\cdot c) = \sum_{\mu \subset \lambda}S_{\lambda/\mu}(a(\bull)) \, S_\mu(c),\]
where $S_{\lambda/\mu}(a(\bull)) = \Det(a(k)_{\lambda_k - \mu_l + l - k})_{1 \leq k, l \leq n}$. 
This too is the same for weak triple and their associated strong triples; although the entries of the matrices are different, the resulting Schur polynomials agree, by elementary row operations, as in Lemma \ref{Schurlem}.  

The coefficients are polynomials in variables $x_i$, $y_j$,  $\tilde{x}_i = - x_{1-i}$ and $\tilde{y}_j = - y_{1-j}$, for $i$ and $j$ positive.  Our goal is to give a positive formula this determinant in these variables, by showing that it is the sum of weights of tableaux on the skew shape of $\lambda/\mu$.

\subsection{Tableau formula for vexillary determinants}

When $\tau$ is a triple with all $p_i$ and $q_i$ nonnegative, Theorem \ref{t.tableau} gives a formula for the coefficients in $\P_\tau = \sum S_{\lambda/\mu}(a(\bull)) \, S_\mu(c)$.
The general vexillary enriched Schubert polynomials can be realized as specializations of those coming from those using nonnegative or nonpositive sequences, which will give tableau formulas for them. 

Given a complete triple $\tau = (p_\bull,q_\bull)$, 
define integers $0 \leq r \leq s \leq n+1$ by the rules: 
\[ r \, \text{ is the maximal integer such that } p_r < 0 \text{ and } q_r > 0 ,\]
with $r = 0$ if there is no such integer; and
 \[ s  \, \text{ is the minimal integer such that } p_s > 0 \text{ and } q_s < 0,\]
with $s = n+1$ if there is no such integer.  There are two cases:
either (1) $p_i \geq 0$ and $q_i \geq 0$ for all $r < i < s$; or (2) $p_i \leq 0$ and $q_i \leq 0$ for all $r < i < s$.  (The cases where $s = r+1$ or where $s=r+2$ and $p_{r+1} = q_{r+1} = 0$ can be regarded in either case.)

Define the complete triple $\overline{\tau} = (\overline{p}_\bull,\overline{q}_\bull)$ by 
setting 
\[ \overline{p}_i = \begin{cases} 0  \text{ if } i \leq r \\ p_i  \text{ if } r < i < s \\ p_i - q_i  \text{ if } s \leq i \end{cases}  \; \text{ and } \; \overline{q}_i = \begin{cases} q_i - p_i  \text{ if } i \leq r \\ q_i  \text{ if } r < i < s \\ 0  \text{ if } s \leq i \end{cases} \]
So $\overline{\tau}$ is nonnegative in case (1), and nonpositive in case (1).  Note that $\lambda(\overline{\tau}) = \lambda(\tau)$ in either case.

Consider case (1).  Set $\overline{a}(i) = \frac{\prod_{b=1}^{\overline{q}_i}(1 + v_b\,t)}{\prod_{a=1}^{\overline{p}_i}(1-u_a\,t)}$, for variables $u_k$ and $v_l$.  Theorem \ref{t.tableau} gives a tableau formula for $S_{\lambda/\mu}(\overline{a}(\bull))$ as a sum of products of the variables $u_k$, $v_l$, and $u_k + v_l$.  From the definition as determinants, one sees that $S_{\lambda/\mu}(a(\bull))$ can be obtained from $S_{\lambda/\mu}(\overline{a}(\bull))$ by specializing the $u$ and $v$ variables  to the $x$ and $y$ variables by the following rules:

Specialize the variables $v_1, \ldots, v_{\lambda_1 - 1}$ to the variables $y_i$ and $\tilde{x}_j$, by using the orderings $q_r \leq \cdots \leq q_1$  and $-p_r \leq \cdots \leq -p_1$.  That is,  $v_1, \ldots v_{q_r}$ are specialized to $y_1, \ldots, y_{q_r}$; then the next $-p_r$ of the $v$ variables are specialized to 
$\tilde{x}_1, \ldots, \tilde{x}_{-p_r}$; then the the next $q_{r-1} - q_r$ $v$ variables are specialized to $y_{q_r+1}, \ldots, y_{q_{r-1}}$, the next $p_r - p_{r-1}$ to $\tilde{x}_{-p_r+1}, \ldots, \tilde{x}_{-p_{r-1}}$, and so on until all $q_1 - p_1$ of the $v$ variables have been specialized.

Specialize the variables $u_1, \ldots, u_{n - \lambda_n}$ to the variables $x_i$ and $\tilde{y}_j$, by using the orderings $p_s \leq \cdots \leq p_n$  and $-q_s \leq \cdots \leq -q_n$.  That is,  $u_1, \ldots u_{p_s}$ are specialized to $x_1, \ldots, x_{p_s}$, then the next $-q_s$ $u$ variables are specialized to $\tilde{y}_1, \ldots \tilde{y}_{-q_s}$, and so on, alternating, until all $p_n - q_n$ of the $u$ variables are specialized.

Note that for $r < i < s$ the first $p_i$ of the $u$ variables are specialized to $x_1, \ldots, x_{p_i}$ and the first $q_i$ of the $v$ variables are specialized to $y_1, \ldots, y_{q_i}$.

Case (2) is entirely similar, with substitutions as follows.  Specialize the $v$ variables to the $\tilde{x}$'s and $y$'s, using the orderings $-p_r \leq \cdots \leq -p_1$ and $q_r \leq \cdots \leq q_1$.  Specialize the $u$ variables to the $\tilde{y}$'s and $x$'s, using the orderings $-q_s \leq \cdots \leq -q_n$ and  $p_s \leq \cdots \leq p_n$.

The next theorem then follows from Theorem~\ref{t.tableau}.

\begin{thm}  With these substitutions, 
\[S_{\lambda/\mu}(a(1), \ldots, a(n)) = \Det(a(i)_{\lambda_i - \mu_j + j - i})_{1 \leq i,j \leq n} \]
 is the sum of the weights of all tableaux on $\lambda/\mu$.
\end{thm}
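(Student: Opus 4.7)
The plan is to reduce the theorem to an instance of Theorem~\ref{t.tableau} via the prescribed specialization of variables. I would treat case~(1) first; case~(2) is entirely symmetric, obtained by swapping the roles of the two alphabets. The auxiliary triple $\overline{\tau}$ is nonnegative by construction, so Theorem~\ref{t.tableau} directly expresses $S_{\lambda/\mu}(\overline{a}(\bull))$ as a sum of weighted tableaux on $\lambda/\mu$ (using the flags $\overline{p}_\bull$, $\overline{q}_\bull$) with weights in the variables $u_k$, $v_l$, and $u_k + v_l$. Since the Schur determinant is preserved under any ring homomorphism applied to its entries, once I check that the given substitution carries each $\overline{a}(i)$ to $a(i)$ as a power series in $t$, applying the same substitution to the tableau-sum side yields the desired formula.

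The main verification is to check the identity $\overline{a}(i) \mapsto a(i)$ for each row, which I would split into three ranges. For $r < i < s$ (middle rows), $\overline{p}_i = p_i \geq 0$ and $\overline{q}_i = q_i \geq 0$, and the recipe sends $u_1, \ldots, u_{p_i}$ to $x_1, \ldots, x_{p_i}$ and $v_1, \ldots, v_{q_i}$ to $y_1, \ldots, y_{q_i}$, so $\overline{a}(i) = a(i)$ by inspection. For $i \leq r$, $\overline{p}_i = 0$ and $\overline{q}_i = q_i - p_i$, so $\overline{a}(i)$ is a product of $(1+v_b t)$ factors, and I would need to show that the specialization of $v_1, \ldots, v_{\overline{q}_i}$ hits exactly the multiset $\{y_1, \ldots, y_{q_i}\} \cup \{\tilde{x}_1, \ldots, \tilde{x}_{-p_i}\}$; this matches the numerator of $a(p_i, q_i)$ after rewriting $1 - x_k t = 1 + \tilde{x}_{1-k} t$. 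The argument is telescoping: the nested orderings $q_r \leq \cdots \leq q_1$ and $-p_r \leq \cdots \leq -p_1$ guarantee that by the time we have specialized the first $\overline{q}_i = q_i - p_i$ of the $v$'s, we have used exactly the $y$'s up through $y_{q_i}$ and the $\tilde{x}$'s up through $\tilde{x}_{-p_i}$. The range $i \geq s$ is handled identically with the $u$ variables in place of the $v$ variables.

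The final step is to observe that the tableau weights transform coherently under the substitution. A plain entry $k$ in row $i$ is constrained to $1 \leq k \leq \overline{p}_i$, so $u_k$ specializes to its prescribed image (an $x_a$ or a $\tilde{y}_b$); a primed entry $l'$ has $1 \leq l \leq \overline{q}_i$, so $v_l$ specializes analogously; and a mixed entry $(k, l')$ sends $u_k + v_l$ to the corresponding sum of specialized images. Crucially, mixed entries can only occur in rows with both $\overline{p}_i > 0$ and $\overline{q}_i > 0$, which forces $r < i < s$, and in those rows both specializations go into the $x$'s and $y$'s, so the weight $u_k + v_l \mapsto x_k + y_l$ is exactly the weight decreed in the statement.

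The hard part will not be mathematical depth but index bookkeeping: keeping straight the alternating intervals of $y$'s and $\tilde{x}$'s (respectively $x$'s and $\tilde{y}$'s) during the telescoping check, and confirming that the boundary cases $s = r+1$ and $s = r+2$ with $p_{r+1} = q_{r+1} = 0$ do not create ambiguity. Once that is in hand, everything reduces to pushing Theorem~\ref{t.tableau} through a ring homomorphism, and the theorem follows.
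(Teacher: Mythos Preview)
Your proposal is correct and follows essentially the same approach as the paper: the paper sets up the nonnegative auxiliary triple $\overline{\tau}$, describes the specialization of the $u$ and $v$ variables, and then simply states that the theorem follows from Theorem~\ref{t.tableau}. Your write-up is a more detailed unpacking of that deduction---the three-range verification that $\overline{a}(i)\mapsto a(i)$, and the observation that mixed entries only occur in middle rows where the specialization lands in genuine $x$'s and $y$'s---but the underlying argument is identical.
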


This theorem applies to \emph{multivariate Schur polynomials}, sometimes denoted $\Psi_\lambda(c,y)$, for any partition $\lambda$.  In fact, if the Young diagram of $\lambda$ has corners at $(k_i,l_i)$, then 
\[\Psi_\lambda(c,y) = \P_\tau(c,0,y) = \sum S_{\lambda/\mu}(a(\bull)) \,S_\mu(c), \]
 where $\tau = (k_\bull,p_\bull,q_\bull)$, with $p_i = 0$ and $q_i = l_i - k_i$, so 
 \[a(k) =  \frac{\prod_{j=1}^{q_i} (1+y_j\,t)} {\prod_{j=q_i+1}^0 (1+y_j\,t)}, \]
 for $i$ minimal such that $k_i \geq k$.  Note that in this case the tableaux have primed integers above the diagonal, and plain integers below the diagonal, with the usual orderings.  The formula of the theorem recovers a tableau formula of Molev \cite[Cor.~5.6]{Mo}:
 
 \begin{cor}  The coefficient $S_{\lambda/\mu}(a(1), \ldots, a(n))$ is the sum of the weights of all tableaux on $\lambda/\mu$, with weights determined by assigning the variable $y_l$ to primed integers $l'$ and $\tilde{y}_k = -y_{1-k}$ to plain integers $k$. 
\end{cor}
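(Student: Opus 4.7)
The plan is to specialize the theorem stated immediately before this corollary to the complete triple underlying $\Psi_\lambda(c,y)$, namely $\tau = (p_\bull, q_\bull)$ of length $n$ with $p_i = 0$ and $q_i = \lambda_i - i$. Because $p_i = 0$ for every $i$, the indices defined in that theorem satisfy $r = 0$ and $s = n+1$, so the case conditions on $p_\bull$ become vacuous and the dichotomy between case (1) and case (2) reduces to the sign of $q_i$ row by row. Recall that when $q_i \geq 0$ the series $a(i)$ is $\prod_{j=1}^{q_i}(1+y_j t)$, contributing elementary symmetric polynomials in $y_1,\ldots,y_{q_i}$, while when $q_i < 0$ it is $1/\prod_{k=1}^{-q_i}(1-\tilde{y}_k t)$, contributing complete symmetric polynomials in $\tilde{y}_1,\ldots,\tilde{y}_{-q_i}$.

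Next I would track the substitutions prescribed by the preceding theorem. Since no $p_i$ is negative, the specialization list for the $v$-variables contains only the $y$'s, giving $v_l \mapsto y_l$; since no $p_i$ is positive, the specialization list for the $u$-variables contains only the $\tilde{y}$'s, giving $u_k \mapsto \tilde{y}_k$. Consequently a primed entry $l'$ contributes $y_l$ and a plain entry $k$ contributes $\tilde{y}_k$; the hypothesis $p_i = 0$ also guarantees that no mixed entries $(k,l')$ can appear in any row, exactly as noted in the remark following Theorem~\ref{t.tableau}. Thus each tableau box has the claimed weight, and the rows with $q_i > 0$ (above the diagonal of $\lambda$) receive only primed entries while those with $q_i < 0$ (below the diagonal) receive only plain entries, matching the description given just before the corollary.

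The main point to check --- and the anticipated obstacle --- is that the case (1)/case (2) dichotomy of the preceding theorem can be applied coherently even when $q_\bull$ changes sign along the sequence (i.e., when $\lambda$ crosses the staircase $(n,n-1,\ldots,1)$). This is resolved by observing that once $p_i \equiv 0$ the two cases prescribe identical substitutions on the surviving variables: neither introduces $x$'s nor $\tilde{x}$'s, and each row's contribution depends only on the sign of its own $q_i$. The ordering prescriptions ($q_r \leq \cdots \leq q_1$ for the positive part, $-q_s \leq \cdots \leq -q_n$ for the negative part) match the indexing of the $y$'s and $\tilde{y}$'s exactly, so no reindexing ambiguity arises at the transition. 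After this verification, the corollary follows directly from the weight computation in the preceding theorem.
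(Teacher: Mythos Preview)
Your proposal is correct and follows the paper's approach: the corollary is simply the specialization of the preceding substitution theorem to the triple with $p_i\equiv 0$, and the paper offers no argument beyond the sentence ``The formula of the theorem recovers a tableau formula of Molev.'' Your discussion of the case~(1)/case~(2) dichotomy when $q_i=\lambda_i-i$ changes sign is more careful than anything the paper writes out; a slightly cleaner way to phrase the resolution is that once $p_i\equiv 0$ one may bypass the dichotomy and invoke Theorem~\ref{t.tableau} directly with the nonnegative sequences $\overline{p}_i=\max(0,\,i-\lambda_i)$ and $\overline{q}_i=\max(0,\,\lambda_i-i)$, which is precisely the ``primed above the diagonal, plain below the diagonal'' picture the paper records just before stating the corollary.
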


\subsection{A tableau formula for $3 2 1$-avoiding permutations}

We give a tableau formula for the enriched Schubert polynomial of a $3 2 1$-avoiding permutation $w$ in $\Sgp_\Z$.  Such a $w$ is determined by (and determines) two strictly decreasing sequences $p_\bull$ and $q_\bull$ of integers of the same length $n$, with $p_i \leq q_i$ for all $i$;  $w$ is the permutation of minimal length such that $w(p_i) = q_i+1$ for all $i$.  As we saw in \cite{AF2}, this Schubert polynomial $\tenS_w(c,x,y)$ is $\Det((a(i)\cdot c \cdot b(j))_{q_i+1 - p_j})$, where 
\[a(i) = \frac{\prod_{b=1}^{q_i }(1+y_b\,t)}{\prod_{b=q_i+1}^{0}(1+y_b\,t)} \;\; \text{ and } \;\; b(j) = \frac{\prod_{a=p_j+1}^{0}(1 - x_a\,t)}{\prod_{a=1}^{p_j}(1 - x_a\,t)}.\]

Define weakly decreasing sequences $\kappa$ and $\rho$ of length $n$ by setting $\kappa_i = q_i+1+ i$ and $\rho_i = p_i + i$ for all $i$.  By Theorem~\ref{mainA}, 
\[\tenS_w(c,x,y) = \sum_{\kappa \supset \lambda \supset \nu \supset \rho}
S_{\kappa/\lambda}(a(\bull)) S_{\lambda/\nu}(c) S_{\nu/\rho}'(b(\bull)).\]
We will obtain a positive formula for these polynomials by giving tableau formulas for the first and third terms in these sums.  

For the first, take $r$ maximal so $q_r \geq 0$; so $a(i) = \prod_{b=1}^{q_i } (1 + y_b\,t)$ for $i \leq r$ and $a(i) = 1/\prod_{a=1}^{-q_i} (1 - \tilde{y}_a \, t)$ for $i > r$, where $\tilde{y}_a = - y_{1-a}$.  Apply Theorem~\ref{t.tableau} to the sequences $(0,\ldots, 0, -q_{r+1}, \ldots, -q_n)$ and $(q_1, \ldots, q_r, 0, \ldots, 0)$ and the two sets of variables $\tilde{y}_a$ and $y_b$.  Then $S_{\kappa/\lambda}(a(\bull))$ is the sum of the weights of the tableaux, where $k$ is given weight $\tilde{y}_k$ and $l'$ is given weight $y_l$.  Note that there are no pairs $(k,l')$ in these tableaux.  

The third terms use the dual tableau formula.  Take $r$ maximal so $p_r \geq 0$, so $b(j) = 1/\prod_{a=1}^{p_j}(1-x_a\,t)$ if $j \geq r$, and $b(j) = \prod_{b=1}^{-p_j}(1+\tilde{x}_b \,t)$ if $j < r$, where $\tilde{x}_b = -x_{1-b}$.  Use the sequences $(0, \ldots, 0,p_r, \ldots, p_n)$ and $(-p_1, \ldots, -p_r, 0, \ldots, 0)$ and the variables $x_a$ and $\tilde{x}_b$.  Then $S_{\nu/\rho}'(b(\bull))$ is the sum of the weights of the dual tableaux on $\nu/\rho$, where $k$ is given weight $x_k$ and $l'$ is given weight $\tilde{x}_l$.

For these sequences $\lambda \supset \nu$, one has the usual positive decomposition 
\[S_{\lambda/\nu} = \sum \sum c_{\mu \, \nu}^\lambda S_\mu(c),\]
a sum over partitions $\mu$;  here $c_{\mu \, \nu}^\lambda = c_{\mu \, \nu+t}^{\lambda+t}$, for any integer $t$ such that $\nu + t = (\nu_1+t, \ldots, \nu_n + t)$, and 
so $\lambda + t = (\lambda_1+t, \ldots, \lambda_n + t)$ are partitions.

\begin{cor}\label{321} The $3 2 1$-avoiding enriched Schubert polynomial is given by the formula
\[ 
S_w = \sum c_{\mu \, \nu}^\lambda \wt(T) \, S_\mu(c) ,
\]
the sum over partitions $\mu$ and all $\lambda$ and $\nu$ such that $\kappa \supset \lambda \supset \nu \supset \rho$, with the tableaux $T$ on $\kappa/\lambda$ and $\nu/\rho$ as above.  
\end{cor}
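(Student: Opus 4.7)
The plan is to combine Theorem~\ref{mainA} with the two tableau formulas established earlier in the section and the classical Schur-basis expansion of skew Schur functions. Applying Theorem~\ref{mainA} to the determinantal expression $\tenS_w(c,x,y) = \Det((a(i)\cdot c\cdot b(j))_{q_i+1-p_j})$ yields, as displayed just before the corollary,
\[
\tenS_w(c,x,y) \;=\; \sum_{\kappa \supset \lambda \supset \nu \supset \rho} S_{\kappa/\lambda}(a(\bull))\, S_{\lambda/\nu}(c)\, S'_{\nu/\rho}(b(\bull)).
\]
The strategy is then to rewrite each of the three factors on the right and collect terms.

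For $S_{\kappa/\lambda}(a(\bull))$, I would use the observation already recorded: letting $r$ be maximal with $q_r \geq 0$, each $a(i)$ with $i\le r$ is a pure numerator in the $y$'s, and each $a(i)$ with $i>r$ is a pure denominator in the $\tilde y$'s. Hence the setup of Theorem~\ref{t.tableau} applies with $p_\bull=(0,\ldots,0,-q_{r+1},\ldots,-q_n)$ and $q_\bull=(q_1,\ldots,q_r,0,\ldots,0)$ and the two variable sets $\tilde y$ and $y$. The hypotheses guarantee that no row simultaneously admits primed and plain entries, so no mixed pairs $(k,l')$ arise, and Theorem~\ref{t.tableau} expresses $S_{\kappa/\lambda}(a(\bull))$ as a sum over tableaux on $\kappa/\lambda$ with weight $\tilde y_k$ for plain $k$ and $y_l$ for primed $l'$. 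The analogous argument, via Corollary~\ref{dualtableau}, gives $S'_{\nu/\rho}(b(\bull))$ as a sum over dual tableaux on $\nu/\rho$ with weight $x_k$ on plain $k$ and $\tilde x_l$ on primed $l'$.

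For the middle factor, I would use $S_{\lambda/\nu}(c) = \sum_\mu c_{\mu\,\nu}^{\lambda}\,S_\mu(c)$. Since $\lambda$ and $\nu$ here are only weakly decreasing integer sequences, this requires the translation convention already set up in the paragraph preceding the corollary: choose $t$ large enough that $\lambda+t$ and $\nu+t$ are partitions and set $c_{\mu\,\nu}^{\lambda}:=c_{\mu,\nu+t}^{\lambda+t}$. The definition of $S_{\lambda/\nu}(c)$ as the determinant $\Det(c_{\lambda_i-\nu_j+j-i})$ depends only on the differences $\lambda_i-\nu_j$, so it is invariant under a simultaneous shift of $\lambda$ and $\nu$ by $t$, and the classical Littlewood--Richardson coefficients on the right are independent of $t$ once $t$ is sufficiently large. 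Substituting all three expansions and letting $\wt(T)$ denote the product of the weights of the tableau on $\kappa/\lambda$ and the dual tableau on $\nu/\rho$ produces the claimed formula.

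The main obstacle is the bookkeeping in the second paragraph, namely checking that the specialization prescribed before the theorem in the vexillary section matches exactly the Chern series appearing in the $321$-avoiding case (so that Theorem~\ref{t.tableau} can be invoked verbatim for both positive and negative $y$-indices) and that no mixed entries slip in. Once that is confirmed, the rest of the proof is purely a matter of substitution into the identity from Theorem~\ref{mainA}; the extension of $S_{\lambda/\nu}(c)=\sum c_{\mu\nu}^\lambda S_\mu(c)$ to weakly decreasing sequences is routine translation invariance.
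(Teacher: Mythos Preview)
Your proposal is correct and follows essentially the same approach as the paper: the corollary is deduced from the discussion immediately preceding it, which applies Theorem~\ref{mainA} to obtain the triple sum, invokes Theorem~\ref{t.tableau} (resp.\ Corollary~\ref{dualtableau}) for the tableau expansion of $S_{\kappa/\lambda}(a(\bull))$ (resp.\ $S'_{\nu/\rho}(b(\bull))$), and then substitutes the translated Littlewood--Richardson expansion $S_{\lambda/\nu}(c)=\sum_\mu c_{\mu\,\nu}^{\lambda}S_\mu(c)$. Your remarks on the absence of mixed entries and on translation invariance for the $c_{\mu\,\nu}^{\lambda}$ match the paper's own observations verbatim.
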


In particular, this gives a tableau formula for the classical double Schubert polynomial $\mathfrak{S}_w(x,-y)$ when $w$ is $3 2 1$-avoiding and in $S_{+}$, i.e., the sequences $p_i$ and $q_i$ are positive.  The sum is then over all $\lambda$ between $\rho$ and $\kappa$, taking $\nu = \lambda$.  The resulting formula is different from the one in \cite{CYY} (which weights tableaux by products of binomials, rather than monomials).

\section{Operators in Type A}

In \cite{AF2}, we described a construction of enriched (twisted) Schubert polynomials in type A, which live in a polynomial ring $\bLambda[x,y]$, where  $\bLambda = \Z[c,z]$.  Here $c$ stands for variables $c_k$ of degree $k$, for $k \geq 1$, $z$ is a variable of degree $1$, and $x$ and $y$ stand for variables $x_i$ and $y_i$ for all integers $i$.  (For the untwisted case, set $z$ to be $0$.)  The Schur polynomials  $S_\lambda(c)$ form an additive basis for $\bLambda$ over $\Z[z]$.  We give formulas for three operators on these algebras.

\subsection{Difference operators}

We use the first two sections to give a formula for the difference operator $\del_0 = \del_0^x$ in type A,.  (There is an operator  $\del_0^y$ 
which is obtained by interchanging each $x_i$ with $y_i$; and there are operators $\del_i^x$ and $\del_i^y$ for $i \neq 0$, described in \cite{AF2} and  \S4 and \S10 of \cite{LLS}, which are simpler.)  These are endomorphisms of the $\Z[z]$-algebra $\bLambda[x,y]$.  This $\del_0$ comes with a related automorphism $s_0$, which in this case is given by the formula
\[s_0(c) = c \cdot \frac{1+(z-x_0) t}{1+(z-x_1) t}, \]
where $c = 1 + \sum_{k>0}c_k t^k$.
Applying Theorem~\ref{mainA} and Corollary~\ref{c.border}, with $x = x_1 - z$ and $y = z - x_0$, this gives the formula 
\[
s_0(S_\lambda(c)) \, = 
\, \sum_{\mu \subset \lambda} (x_1-z)^{v(\lambda/\mu)} \, (z-x_0)^{h(\lambda/\mu)} \, (x_1 - x_0)^{k(\lambda/\mu)} \, S_\mu(c).
\]

Since $\del_0(f) = \frac{f - s_0(f)}{x_0-x_1}$, we get

\begin{prop}
\[
\del_0(S_\lambda(c)) \, = 
\, \sum_{\mu \subsetneqq \lambda} (x_1-z)^{v(\lambda/\mu)} \, (z-x_0)^{h(\lambda/\mu)} \, (x_1 - x_0)^{k(\lambda/\mu)-1} \, S_\mu(c).
\]
\end{prop}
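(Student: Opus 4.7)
The plan is a direct computation starting from the formula for $s_0(S_\lambda(c))$ established immediately above the proposition (which itself comes from Theorem~\ref{mainA} and Corollary~\ref{c.border} with $x = x_1 - z$ and $y = z - x_0$). Since $\del_0$ is by definition $(f - s_0(f))/(x_0 - x_1)$, the whole proof consists in manipulating the already-given expansion of $s_0(S_\lambda(c))$.

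First I would separate off the $\mu = \lambda$ term in the sum for $s_0(S_\lambda(c))$. For $\mu = \lambda$ the border strip $\lambda/\mu$ is empty, so $v(\lambda/\lambda) = h(\lambda/\lambda) = k(\lambda/\lambda) = 0$ and this term contributes exactly $S_\lambda(c)$. Therefore
\[
S_\lambda(c) - s_0(S_\lambda(c)) = -\sum_{\mu \subsetneqq \lambda} (x_1-z)^{v(\lambda/\mu)}(z-x_0)^{h(\lambda/\mu)}(x_1-x_0)^{k(\lambda/\mu)} \, S_\mu(c),
\]
where I now sum only over proper subpartitions $\mu$, for which the removed border strip is nonempty and in particular $k(\lambda/\mu) \geq 1$.

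Next I would divide by $x_0 - x_1$. Because $k(\lambda/\mu) \geq 1$, I can factor $(x_1 - x_0)^{k(\lambda/\mu)} = (x_1 - x_0)(x_1-x_0)^{k(\lambda/\mu)-1}$, and the sign cancels against the minus sign above via $-(x_1-x_0)/(x_0-x_1) = 1$. This yields precisely
\[
\del_0(S_\lambda(c)) = \sum_{\mu \subsetneqq \lambda} (x_1-z)^{v(\lambda/\mu)}(z-x_0)^{h(\lambda/\mu)}(x_1-x_0)^{k(\lambda/\mu)-1} \, S_\mu(c),
\]
as claimed. There is no real obstacle here: all the hard work has already been packaged into the $s_0$ formula via Corollary~\ref{c.border}; the only thing to be careful about is tracking the signs and confirming that divisibility by $x_0 - x_1$ is automatic because each nonzero term on the right has a factor $(x_1-x_0)^{k}$ with $k \geq 1$.
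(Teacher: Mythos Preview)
Your proof is correct and is exactly the computation the paper has in mind: the paper simply writes ``Since $\del_0(f) = \frac{f - s_0(f)}{x_0-x_1}$, we get'' and states the formula, leaving the separation of the $\mu=\lambda$ term and the cancellation of the factor $(x_1-x_0)$ implicit. You have spelled out precisely those steps.
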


Setting $z = 0$ gives the corresponding difference operator  on $\Lambda[x,y]$, with $\Lambda = \Z[c]$:
\[
\del_0(S_\lambda(c)) \, = \, \sum x_1^{v(\lambda/\mu)} \, (-x_0)^{h(\lambda/\mu)} \, (x_1 - x_0)^{k(\lambda/\mu)-1} \, S_\mu(c).
\]

\subsection{Translation operators}

Similarly, we have a formula for the translation operators $\gamma^m$ on $\bLambda[x,y]$, also discussed in \cite{AF2}.  These are defined by the equations 
$\gamma^m(x_i) = x_{m+i}$  and $\gamma^m(y_i) = y_{m+i}$  
for all integers $m$ and $i$, and
\[ 
\gamma^m(c) = c \cdot \prod_{i=1}^m \frac{1+y_i \, t}{1+(z-x_i)\, t}
 \cdot \prod_{i=m+1}^0 \frac{1+(z-x_i)\, t}{1+y_i \, t}. 
\]
(This formula is an abbreviation for two formulas, with the first product used if $m$ is positive, the second if $m$ is negative.)

\begin{prop}
For $m > 0$, 
\[
\gamma^m(S_\lambda(c)) = \sum_{\mu \subset \lambda} \sum_{T} (x-z,y)^T \, S_\mu(c), 
\]
the second sum over all tableaux $T$ on the skew shape $\lambda/\mu$  with entries 
$1' < 1 < 2' < 2 < \dots < m' < m$, weakly increasing along rows and down columns, with no $k$ 
repeated in a column and no $k'$ repeated in a row; and $(x-z,y)^T = \prod (x_k-z)^{\# \{k \in T\}} \prod y_k^{\#\{ k' \in T\}}$.
\end{prop}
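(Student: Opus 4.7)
The plan is to read $\gamma^m(S_\lambda(c))$ as a Schur determinant in a multiplied Chern series, and then combine the identity from the introduction (the corollary to Theorem~\ref{mainA}) with the tableau formula of Proposition~\ref{Atableaux}.

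First I would rewrite the formula for $\gamma^m(c)$ in the case $m>0$ as
\[
\gamma^m(c) \;=\; a \cdot c, \qquad \text{where } \; a \;=\; \prod_{i=1}^{m} \frac{1+y_i\,t}{1-(x_i-z)\,t},
\]
using $1+(z-x_i)t = 1-(x_i-z)t$. Since $\gamma^m$ is a $\Z[z]$-algebra endomorphism of $\bLambda[x,y]$, it acts on the Schur determinant entrywise: for each $k$, $\gamma^m(c_k) = (\gamma^m(c))_k = (a\cdot c)_k$. Therefore
\[
\gamma^m(S_\lambda(c)) \;=\; \Det\!\bigl((a\cdot c)_{\lambda_i+j-i}\bigr) \;=\; S_\lambda(a\cdot c, \ldots, a\cdot c),
\]
all row-series equal to $a\cdot c$.

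Next I would apply the corollary to Theorem~\ref{mainA} (the identity from the introduction) in the case where every $a(i)$ equals the same $a$. This yields
\[
S_\lambda(a\cdot c, \ldots, a\cdot c) \;=\; \sum_{\mu\subset\lambda} S_{\lambda/\mu}(a)\, S_\mu(c),
\]
where $S_{\lambda/\mu}(a)$ denotes the Schur determinant with every row coming from the single series $a$.

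Finally I would invoke Proposition~\ref{Atableaux} to expand $S_{\lambda/\mu}(a)$. The series $a$ has exactly the form
$\prod_{i=1}^m (1+y_i t)/(1-\tilde{x}_i t)$ with $\tilde{x}_i := x_i - z$, so the proposition gives
\[
S_{\lambda/\mu}(a) \;=\; \sum_{T} (\tilde{x}, y)^T \;=\; \sum_T (x-z,y)^T,
\]
the sum over all tableaux $T$ on $\lambda/\mu$ with entries in $1'<1<2'<2<\cdots<m'<m$, weakly increasing along rows and down columns, no $k$ repeated in a column and no $k'$ repeated in a row; the monomial $(x-z,y)^T$ records $(x_k-z)$ for each plain entry $k$ and $y_l$ for each primed entry $l'$. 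Substituting into the previous display gives the asserted formula.

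No step is genuinely hard: the whole argument is the identity-from-the-introduction followed by the classical flagged-tableau expansion, with the single bookkeeping point being the variable shift $x_i \mapsto x_i - z$ in the denominators of $a$, which produces the $(x-z)^{\#\{k\in T\}}$ factors in the weight. If anything deserves care, it is verifying that $\gamma^m$ acts on the determinant through its action on $c$ alone (the $x,y$ entries of $a$ are inside $\gamma^m$'s image and are not further moved by it), but this is immediate from the definition of $\gamma^m$ once one notes that $\gamma^m$ relabels the $x_i$ and $y_i$ globally before the Schur polynomial is formed, and the proposition is stated in the target variables.
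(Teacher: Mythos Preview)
Your proof is correct and is exactly the argument the paper intends: the proposition is stated without proof because it follows immediately from the corollary to Theorem~\ref{mainA} together with Proposition~\ref{Atableaux}, after the substitution $x_i\mapsto x_i-z$ in the denominators of $a$. Your final paragraph can be simplified to the single observation that $S_\lambda(c)$ involves only the $c_k$'s, so the ring homomorphism $\gamma^m$ acts by $c_k\mapsto (a\cdot c)_k$ and nothing about the $x,y$ variables needs checking.
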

The formula for $\gamma^{-m}(S_\lambda(c))$ is similar, but each $k$ in a tableau contributes a factor $-y_k$ and each $k'$ contributes $z - x_k$.  

\subsection{Twisting operator}

We give a tableau formula for the operator that takes the Chern class of a virtual bundle of rank $0$ to the Chern class of its twist (tensor product) by a line bundle.  
We work in the polynomial ring $\bLambda = \Z[c,z]$.

There is an endomorphism $\theta$ of the $\Z[z]$-algebra $\bLambda$ that takes $c_k$ to $\sum_{i=1}^k \tbinom{k-1}{i-1}z^{k-i}c_i$.  This corresponds to twisting by a line bundle in geometry: if $\xi$ is a virtual vector bundle of rank $0$, and $L$ is a line bundle with first Chern class $z$, then $\theta$ takes $c_k(\xi)$ to $c_k(\xi \cdot [L^*])$.

\begin{lem}
For a partition $\lambda$ of length $n$,
\[  \theta(S_\lambda(c)) = \Det(c(i)_{\lambda_i + j - i})_{1 \leq i,j \leq n},\]
where $c(i) = a(i) \cdot c$, with $a(i) = (1+z)^{\lambda_i - i}$ for $1 \leq i \leq n$.
\end{lem}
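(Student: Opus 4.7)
The plan is to exploit that $\theta$ is a ring homomorphism, write $\theta(S_\lambda(c))=\Det(\theta(c)_{\lambda_i+j-i})$, and then relate this matrix to $(c(i)_{\lambda_i+j-i})$ by a unipotent column operation, so that the two determinants agree.

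The key input is the single-coefficient identity
\[
\theta(c)_n \;=\; [t^n]\,(1+zt)^{n-1}\,c(t), \qquad n\geq 0.
\]
I would verify this by expanding the right-hand side as $\sum_{k}\binom{n-1}{n-k}\,z^{n-k}\,c_k$ and using $\binom{n-1}{n-k}=\binom{n-1}{k-1}$ to recover the defining formula $\theta(c_k)=\sum_{i=1}^k\binom{k-1}{i-1}z^{k-i}c_i$ term by term. (Equivalently, since $\theta(c)(t)=c\bigl(t/(1-zt)\bigr)$, the identity is a residue change of variable $s=t/(1-zt)$.)

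Applying this with $n=\lambda_i+j-i$ and factoring $(1+zt)^{n-1}=(1+zt)^{j-1}(1+zt)^{\lambda_i-i}$ gives
\[
\theta(c)_{\lambda_i+j-i} \;=\; [t^{\lambda_i+j-i}]\,(1+zt)^{j-1}\cdot(1+zt)^{\lambda_i-i}c(t) \;=\; \sum_{k=0}^{j-1}\binom{j-1}{k}\,z^{k}\,c(i)_{\lambda_i+(j-k)-i}.
\]
This reads as a matrix identity $(\theta(c)_{\lambda_i+j-i})=(c(i)_{\lambda_i+j-i})\cdot B$, where $B_{j',j}=\binom{j-1}{j-j'}z^{j-j'}$ for $j'\leq j$ and zero otherwise. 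Since $B$ is upper triangular with $1$'s on the diagonal, $\Det B=1$, and the lemma follows upon taking determinants.

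The only real content is the coefficient identity in the first step; everything after is a formal column operation, and the same argument applies verbatim with $\lambda$ replaced by any weakly decreasing sequence of integers (not necessarily a partition).
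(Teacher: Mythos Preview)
Your proof is correct and follows the same approach as the paper: both argue that the matrix $A=(\theta(c_{\lambda_i+j-i}))$ and the matrix $M=(c(i)_{\lambda_i+j-i})$ are related by unipotent column operations, hence have equal determinant. The paper describes this as an explicit sequence of elementary operations (repeatedly adding $z$ times a column to its right neighbor), whereas you package the same content into the single identity $\theta(c)_n=[t^n](1+zt)^{n-1}c(t)$ and the resulting factorization $A=MB$ with $B$ the upper-triangular Pascal-type matrix; these are two ways of saying the same thing, and your generating-function formulation makes the verification transparent. One tiny remark: you state the coefficient identity only for $n\ge 0$ but then apply it with $n=\lambda_i+j-i$, which can be negative; this is harmless since for such entries both $\theta(c)_n=0$ and every term $c(i)_{\lambda_i+(j-k)-i}$ in your sum has negative index and vanishes, so the matrix equation $A=MB$ holds entrywise regardless.
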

\begin{proof}  Let $A = (a_{i\,j})$, where $ a_{i\,j} = \theta(c_{\lambda_i+j-i})$.  A succession of column operations will turn $A$ into the matrix appearing on the right side of the identity, so they will have the same determinant.  Each step will add $z$ times the $i^{\text{th}}$ column to the column to its right.  The order of these $i$'s is from  $i = n-1$ down to $i=k$, successively for $k$ varying from $1$ to $n-1$.
\end{proof}

Corollary \ref{c.border} gives a tableau formula for these coefficients.  The maximal $d$ such that $\lambda_d \geq d$ is called the \textit{Durfee square} of $\lambda$, and denoted $d(\lambda)$.  If $\mu \subset \lambda$ have the same Durfee square, define a \emph{tableau} on $\lambda/\mu$ to be a filling of the boxes of this skew shape with positive integers, satisfying:
\begin{enumerate}
\item the entries above the diagonal are strictly increasing across rows, weakly increasing down columns, and the entries in the $i^\text{th}$ row are at most $\lambda_i - i$;
\item the entries below the diagonal are strictly increasing down columns, weakly increasing across rows, and the entries in the $i^\text{th}$ column  are at most $\lambda'_i - i$.
\end{enumerate} 
Let $N(\lambda/\mu)$ be the number of tableaux on $\lambda/\mu$.

For $\mu \subset \lambda$ of the same Durfee square $d$, set 
\[  
z^{\lambda/\mu} \, = \, z^{\sum_{i=1}^d \lambda_i - \mu_i} \, (-z)^{\sum_{i=1}^d \lambda'_i - \mu'_i}.
\]
(The two exponents are the number of boxes above the diagonal and below the diagonal in the skew shape.)

\begin{prop} For any partition $\lambda$ 
\[ \theta(S_\lambda(c)) = \sum_{\substack{\mu \subset \lambda \\d(\mu) = d(\lambda)}} N(\lambda/\mu) \,  z^{\lambda/\mu} \, S_\mu(c).\]
\end{prop}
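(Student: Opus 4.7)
The plan is to derive the proposition from Theorem~\ref{t.tableau}. First, the preceding Lemma rewrites $\theta(S_\lambda(c)) = S_\lambda(a(\bull) \cdot c)$ with $a(i) = (1+zt)^{\lambda_i - i}$, and the corollary of Theorem~\ref{mainA} then expands this as $\sum_{\mu \subset \lambda} S_{\lambda/\mu}(a(\bull)) \, S_\mu(c)$, reducing the problem to computing each coefficient $S_{\lambda/\mu}(a(\bull))$.

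To fit this into Theorem~\ref{t.tableau}, I take the doubly flagged data
\[ (p_i, q_i) = \begin{cases} (0,\; \lambda_i - i) & \text{if } i \leq d, \\ (i - \lambda_i,\; 0) & \text{if } i > d, \end{cases} \]
where $d = d(\lambda)$, and specialize all $y_b$ to $z$ and all $x_a$ to $-z$. A routine check shows that $p_\bull$ is weakly increasing and $q_\bull$ weakly decreasing, with a strict change at each step; that $\lambda_i = q_i - p_i + i$ (so the shift is zero); and that the specializations recover $a(i) = (1+zt)^{\lambda_i - i}$ in both the $i \leq d$ and $i > d$ cases. Hence $S_{\lambda/\mu}(a(\bull))$ equals the sum over tableaux on $\lambda/\mu$ whose rows $i \leq d$ carry only primed entries $l'$ with $l \leq \lambda_i - i$ (weight $z$ each) and whose rows $i > d$ carry only plain entries $k$ with $k \leq i - \lambda_i$ (weight $-z$ each).

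The next step is to argue that only $\mu$ with $d(\mu) = d$ contribute. A row $i \leq d$ must be filled with $\lambda_i - \mu_i$ strictly increasing primed letters from an alphabet of size $\lambda_i - i$, which requires $\mu_i \geq i$. Since $\mu$ is a partition, this forces $\mu_d \geq d$, hence $d(\mu) \geq d$, and $\mu \subset \lambda$ gives the reverse inequality. When $d(\mu) = d$, the skew boxes lie strictly above the diagonal in rows $\leq d$ (since $\mu_i \geq d \geq i$) and strictly below in rows $> d$ (since $\lambda_i \leq d < i$), so the tableau decomposes cleanly into an above- and a below-diagonal part. Its monomial weight is $z^{\lambda/\mu}$, and the above-diagonal conditions match the proposition's rules verbatim once the primes are dropped.

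The remaining and essentially only subtle point, which I expect to be the main obstacle, is that Theorem~\ref{t.tableau} bounds the plain entry in row $i$ by $i - \lambda_i$, whereas the proposition bounds the entry in column $j$ by $\lambda'_j - j$. Since $j \leq \lambda_i$ and $i \leq \lambda'_j$, the first bound is always at least as strong, so the forward implication is automatic. For the converse, given a box $(i, j)$ in the skew shape, the boxes $(i, \lambda_i), (i+1, \lambda_i), \ldots, (\lambda'_{\lambda_i}, \lambda_i)$ all lie in the skew: $(i, \lambda_i)$ does because $\mu_i < j \leq \lambda_i$, and the others do because $\mu'_{\lambda_i} < i \leq i'$ for each $i' \geq i$. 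Strict column monotonicity along column $\lambda_i$, combined with the proposition's bound at the bottom row $\lambda'_{\lambda_i}$, then forces the entry at $(i, \lambda_i)$ to be at most $i - \lambda_i$, and weak row monotonicity across row $i$ propagates this bound back to $(i, j)$. With this equivalence in hand, $S_{\lambda/\mu}(a(\bull)) = N(\lambda/\mu) \cdot z^{\lambda/\mu}$ for each $\mu$ with $d(\mu) = d$, which gives the proposition.
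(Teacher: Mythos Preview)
Your argument is correct. The paper gives essentially no proof beyond the sentence ``Corollary~\ref{c.border} gives a tableau formula for these coefficients,'' and then states the proposition; that citation appears to be a slip for Theorem~\ref{t.tableau}, since Corollary~\ref{c.border} treats a single $a$ applied uniformly to all rows, whereas here the $a(i)$ genuinely vary with $i$. Your route through Theorem~\ref{t.tableau} with the flag data $(p_i,q_i)$ split at the Durfee square is exactly the natural way to make the paper's claim precise. The one substantive step the paper leaves entirely to the reader is the equivalence, for the below-diagonal part, between the row bound $i-\lambda_i$ coming from Theorem~\ref{t.tableau} and the column bound $\lambda'_j-j$ in the definition of $N(\lambda/\mu)$; your argument for this (walking down column $\lambda_i$ to its bottom and back along row $i$) is clean and correct.
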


Using Frobenius notation for partitions, if $\lambda = (\alpha | \beta)$ and $\mu = (\alpha' | \beta')$, where each of $\alpha$, $\beta$, $\alpha'$, and $\beta'$ are strictly decreasing sequences of nonnegative integers of length $d = d(\lambda) = d(\mu)$, the 
coefficient $N(\lambda/\mu)$ is a product of binomial determinants (cf.~\cite{GV}):
\[ N(\lambda/\mu) = \tbinom{\alpha}{\alpha'} \cdot \tbinom{\beta}{\beta'} ,\]
where $\binom{\alpha}{\alpha'} = \Det(\binom{\alpha_i}{\alpha'_j})_{1 \leq i,j \leq d}$ and
$\binom{\beta}{\beta'} = \Det(\binom{\beta_i}{\beta'_j})_{1 \leq i,j \leq d}$.

\section{An Algebraic Identity for Schur-like Pfaffians}
 
We define \emph{pseudo-pfaffians} $\Pf_\lambda(c(1), \ldots, c(n))$ for any sequence $\lambda = (\lambda_1, \ldots, \lambda_n)$ of integers, and any Chern series $c(1), \ldots, c(n)$.  (The ``pseudo" is because no alternating properties are required; the polynomials need not vanish even if $\lambda$ is a partition with some repeated parts.)  To allow a twist, we work in any commutative $\Z[z]$-algebra.  For $n = 1$, $\Pf_p(c) = c_p$.  For $n = 2$,
\[ 
\Pf_{p,q}(a,b) = \sum_{0 \leq i \leq j \leq q} (-1)^j ( \tbinom j i +\tbinom {j-1} i ) z^i \, a_{p+j-i} \, b_{q-j} .
\]
 For $n = 3$, $\Pf_{p,q,r}(a,b,c)$ is defined to be
\[ 
 \Pf_p(a) \, \Pf_{q,r}(b,c)  - \Pf_q(b) \, \Pf_{p,r}(a,c)  + \Pf_r(c) \, \Pf_{p,q}(a,b) .
\]
For $n = 4$, $ \Pf_{p,q,r,s}(a,b,c,d)$ is
\[ 
\Pf_{p,q}(a,b) \,\Pf_{r,s}(c,d)  - \Pf_{p,r}(a,c) \,\Pf_{q,s}(b,d) + \Pf_{p,s}(a,d) \,\Pf_{q,r}(b,d) .
\]
Continuing inductively, for $n$ odd, $\Pf_\lambda(c(1), \ldots, c(n)) $ is
\[
\sum_{k=1}^n (-1)^{k-1} \Pf_{\lambda_k}(c(k)) \Pf_{\lambda_1, \ldots, \widehat{\lambda_k}, \ldots, \lambda_n}(c(1), \ldots, \widehat{c(k)}, \ldots, c(n)),
\]
and for $n$ even, $\Pf_\lambda(c(1), \ldots, c(n)) $ is
\[ 
  \sum_{k=2}^n (-1)^{k} \Pf_{\lambda_1,\lambda_k}(c(1),c(k)) \Pf_{\lambda_2, \ldots, \widehat{\lambda_k}, \ldots, \lambda_n}(c(2), \ldots, \widehat{c(k)}, \ldots, c(n)).
\]
 
We write $\Pf_\lambda(c(\bull))$ for $\Pf_\lambda(c(1), \ldots, c(n))$, and, if all $c(k) = c$ are equal, we write $\Pf_\lambda(c)$ for $\Pf_\lambda(c, \ldots, c)$. 
Our goal here is to express $\Pf_\lambda(a(1)c, \ldots, a(n)c)$ as a sum of polynomials in the $a(i)_j$'s times the pseudo-pfaffians $\Pf_\mu(c)$, as $\mu$ varies over partitions of length at most $n$, for any sequence $\lambda$ of length $n$ and Chern series $a(1), \ldots, a(n)$ and $c$.  If we are in a ring where $\Pf_\mu(c)$ vanishes whenever $\mu$ has repeated entries, which is the case in the basic algebra used for type C Schubert polynomials, this will give a formula in terms of the elements $\Pf_\mu(c)$, as $\mu$ varies over strict partitions.
 
Each coefficient will be a kind of \emph{skew pseudo-pfaffian}, which we denote by 
 $\Pf_{\lambda/\mu}(a(\bull)) = \Pf_{\lambda/\mu}(a(1), \ldots, a(n))$.   These are polynomials that interpolate between pseudo-pfaffians and determinants.  
 They are defined as follows.  Temporarily let $\mu = (\mu_1, \ldots, \mu_n)$ be any sequence of  $n$ nonnegative integers.  For each $k$ that appears in $\mu$, order the set 
\[
 \mu[k] = \{ i \in [1,n] \mid \mu_i = k \} = \{ i_1 < \dots < i_s \}. 
\]
Set 
\[
P[\mu,k] = \Pf_{\lambda_{i_1}-k, \ldots, \lambda_{i_s}-k}(c(i_1), \ldots, c(i_s)) ,  
\]
and let $P[\mu]$ be the product of all $P[\mu,k]$, as $k$ varies over the entries of $\mu$.
 
Now for a partition $\mu$ of length at most $n$, we consider all the {\it distinct} permutations $\sigma$ of its entries.  For example, with $n = 4$,  $\mu = (2,0,0,0)$ has four of them, $\mu = (3,3,0,0)$ has six, and $\mu = (6,4,2,0)$ has $24$. 
Each such permutation has a sign $\sgn(\sigma)$, the minimal number of transpositions needed to get it back in order.  Define
\[ 
 \Pf_{\lambda/\mu}(a(1), \ldots, a(n)) = \sum_{\sigma} \sgn(\sigma) P[\sigma(\mu)], 
\]
the sum over the distinct permutations $\sigma$ of $\mu$.  Note the extreme cases:
\begin{enumerate}
\item If $\mu = (k, \ldots, k)$, then $\Pf_{\lambda/\mu}(a(\bull)) = \Pf_{\lambda_1-k,\ldots, \lambda_n - k}(a(\bull))$. 
\item  If $\mu_1 > \cdots > \mu_n \geq 0$, then $\Pf_{\lambda/\mu}(a(\bull)) = S_{\tilde{\lambda}/\tilde{\mu}}(a(\bull))$, 
 where $\tilde{\lambda}_i = \lambda_i + i - 1$ and $\tilde{\mu}_i = \mu_i + i - 1$.  
\end{enumerate}
If $\lambda$ and $\mu$ are strict partitions, then $\tilde{\lambda}$ and $\tilde{\mu}$ are ordinary partitions, and the shifted skew shape $\lambda/\mu$ is equal to the unshifted shape $\tilde{\lambda}/\tilde{\mu}$.
 
\setcounter{thm}{4}
\begin{thm}
For any sequence $\lambda = (\lambda_1, \ldots, \lambda_n)$ of integers, partition $\nu$ of length at most $n$, and Chern series $a(1), \ldots, a(n)$ and $c$, 
\[  
\Pf_{\lambda/\nu}(a(1)\, c, \ldots, a(n)\, c)  =  \sum  \Pf_{\lambda/\mu}(a(1), \ldots, a(n)) \, \Pf_{\mu/\nu}(c),
\]
the sum over all partitions $\mu \supset \nu$ of length at most $n$ and with 
$ \mu_1 \leq \max(\lambda_1,\ldots, \lambda_n)$.
\end{thm}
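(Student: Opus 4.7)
The plan is to mimic the three-step strategy used for Theorem~\ref{mainA}: establish a multi-linearity lemma, verify a base case, and bootstrap via algebraic independence.

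\textbf{Step 1 (multi-linearity lemma).} I would first prove the pfaffian analogue of Lemma~\ref{Schurlem}(1): if $\tilde a(k) = (1 + x\,t)\,a(k)$ and $\tilde a(i) = a(i)$ for $i \neq k$, and $\tilde\lambda$ agrees with $\lambda$ except that $\tilde\lambda_k = \lambda_k - 1$, then
\[
\Pf_{\lambda/\mu}(\tilde a(\bull)) \,=\, \Pf_{\lambda/\mu}(a(\bull)) \,+\, x\,\Pf_{\tilde\lambda/\mu}(a(\bull)).
\]
This reduces to the cases $n=1,2$, where one sees by direct inspection from the explicit formulas that $\Pf_p(a(k))$ and $\Pf_{p,q}(a(k),a(l))$ are linear in each argument. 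Since the recursive definition of $\Pf_\lambda(c(\bull))$ is a signed sum of products, each product containing $c(k)$ in exactly one factor, linearity in $c(k)$ propagates to general $n$. Applying this summand-by-summand in the definition of $\Pf_{\lambda/\mu}$, only the factor involving $a(k)$ changes, shifting $\lambda_k$ down by one in accordance with $((1+xt)a)_m = a_m + x\,a_{m-1}$.

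\textbf{Step 2 (base case).} Next I would verify the theorem when $a(i) = 1$ for all $i$. With $a(i)_m = \delta_{m,0}$, the elementary pseudo-pfaffians collapse via $\Pf_p(1) = \delta_{p,0}$ and $\Pf_{p,q}(1,1) = (-z)^q \delta_{p,0}$, and each coefficient $\Pf_{\lambda/\mu}(1,\ldots,1)$ becomes a signed sum of $z$-monomials. The identity to check is the combinatorial statement
\[
\sum_{\mu} \Pf_{\lambda/\mu}(1,\ldots,1)\,\Pf_{\mu/\nu}(c) \,=\, \Pf_{\lambda/\nu}(c),
\]
the sum over partitions $\mu \supset \nu$ with $\mu_1 \leq \max_i \lambda_i$. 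One establishes this by tracking how the signed permutation sums in $\Pf_{\lambda/\mu}$ interact with the $z$-twisted factors and match the corresponding expansion of $\Pf_{\lambda/\nu}(c)$.

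\textbf{Step 3 (bootstrap).} Granted Steps 1 and 2, I would run the same argument as at the end of the proof of Theorem~\ref{mainA}. The identity holds vacuously when $\lambda$ is sufficiently small (both sides vanish), and the lemma guarantees it is preserved under the replacement $a(k) \mapsto (1+x_{k,p}\,t)\,a(k)$, modulo the identity for the smaller sequence $\tilde\lambda$. Iterating these replacements, starting from the base case $a(k) = 1$, turns each $a(k)_m$ into an $m$-th elementary symmetric polynomial in arbitrarily many independent variables. Algebraic independence of these polynomials then promotes the identity to arbitrary Chern series.

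\textbf{The main obstacle.} Step 2 will be by far the most delicate part. In the determinantal case, $S_{\kappa/\lambda}(1,\ldots,1)$ reduces to a single Kronecker delta, so only one term on the right survives and the identity is immediate. In the pfaffian setting the $z$-twists in $\Pf_{p,q}(1,1) = (-z)^q \delta_{p,0}$, together with the signed permutation sum $\sum_\sigma \sgn(\sigma)\,P[\sigma(\mu)]$, allow several partitions $\mu$ to contribute simultaneously. Showing that these contributions assemble into $\Pf_{\lambda/\nu}(c)$ --- effectively, a Pieri-type decomposition of $\Pf_{\lambda/\nu}(c)$ in the basis $\{\Pf_{\mu/\nu}(c)\}$ with coefficients $\Pf_{\lambda/\mu}(1,\ldots,1)$ --- is the essential combinatorial task, and will require careful bookkeeping of signs and powers of $z$ through the recursive definition.
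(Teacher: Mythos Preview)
Your Steps 1 and 3 match the paper exactly: the multi-linearity lemma you state is the paper's second lemma, and the bootstrap via elementary symmetric polynomials is identical. The gap is entirely in Step 2, and it comes from misdiagnosing where the difficulty lies.

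When $\lambda$ is already a partition, the base case is \emph{not} hard---it is trivial, just as in the determinantal setting. Your own formula $\Pf_{p,q}(1,1)=(-z)^q\delta_{p,0}$ shows why. In each factor $P[\sigma(\mu),k]=\Pf_{\lambda_{i_1}-k,\ldots,\lambda_{i_s}-k}(1,\ldots,1)$ with $i_1<\cdots<i_s$, the arguments are weakly decreasing (because $\lambda$ is), so in any $\Pf_{p,q}$ arising from the Laplace expansion the first entry is the largest; the factor $\delta_{p,0}$ then forces that entry, and hence every entry, to be $0$. Consequently $P[\sigma(\mu)]\neq 0$ only when $\sigma(\mu)=\lambda$, giving $\Pf_{\lambda/\mu}(1,\ldots,1)=\delta_{\mu,\lambda}$, and the identity collapses to the tautology $\Pf_{\lambda/\nu}(c)=\Pf_{\lambda/\nu}(c)$. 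The ``several $\mu$ contribute simultaneously'' phenomenon you anticipate does not occur here, and the Pieri-type decomposition you describe as ``the essential combinatorial task'' is not needed.

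The genuine difficulty---and the ingredient you are missing---is the case where $\lambda$ is \emph{not} a partition. This case is unavoidable, since the induction in Step~3 replaces $\lambda_k$ by $\lambda_k-1$ and will destroy weak decrease. The paper handles it with a separate lemma you do not have: a symmetry relation expressing $\Pf_{p,q}(c)+\Pf_{q,p}(c)$ as an explicit $\Z[z]$-linear combination of the diagonal terms $\Pf_{q+k,q+k}(c)$ for $0\le k\le \lfloor (p-q)/2\rfloor$. Applied to two adjacent out-of-order entries $\lambda_i<\lambda_{i+1}$, this rewrites both sides of the theorem in the same way as sums over sequences strictly closer to partitions, and induction on this distance completes the base case. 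Without this swap relation your Step~2 has no mechanism for general $\lambda$.
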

If $\lambda$ is a partition, the sum is over partitions $\mu$ with   $\lambda \supset \mu \supset \nu$.  

\begin{cor} 
For any Chern series $a(1), \ldots, , a(n)$ and $c$, and  partition $\lambda$ of length $n$, 
\[
\Pf_{\lambda}(a(1)\,c, \ldots, a(n)\,c) = \sum  \Pf_{\lambda/\mu}(a(1), \ldots, a(n)) \Pf_\mu(c) ,
\]
the sum over all partitions $\mu \subset \lambda$.
\end{cor}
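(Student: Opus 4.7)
The plan is to obtain the corollary as the special case of the preceding Theorem in which $\nu$ is the zero partition $(0,\ldots,0)$ of length $n$.  Only two short verifications are needed.

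First, I would check that the definitions collapse correctly at $\nu = \emptyset$.  By extreme case (1) of the definition of skew pseudo-pfaffian, taking $\mu = (0,\ldots,0)$ (so $k = 0$) gives
\[
\Pf_{\lambda/\mu}(c(\bull)) \, = \, \Pf_{\lambda_1,\ldots,\lambda_n}(c(\bull)).
\]
In particular, with $\nu = (0,\ldots,0)$ we have $\Pf_{\lambda/\nu}(a(1)\,c,\ldots,a(n)\,c) = \Pf_\lambda(a(1)\,c,\ldots,a(n)\,c)$, and $\Pf_{\mu/\nu}(c) = \Pf_\mu(c)$ for every partition $\mu$ of length at most $n$.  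After these substitutions, the Theorem's identity becomes verbatim the one asserted in the corollary.

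Second, I would match the indexing set.  The Theorem sums over partitions $\mu \supset \nu$ of length at most $n$ with $\mu_1 \leq \max(\lambda_1,\ldots,\lambda_n)$.  The sentence immediately following the Theorem records that when $\lambda$ is itself a partition this range collapses to $\{\mu : \lambda \supset \mu \supset \nu\}$; with $\nu = \emptyset$ this becomes $\{\mu : \mu \subset \lambda\}$, exactly the indexing set in the corollary.

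Once the Theorem is in hand there is no substantive obstacle: the corollary is a direct specialization, and the only thing to be careful about is bookkeeping of conventions.  The subtle point worth double-checking is that the $n$-fold zero sequence is legitimately a ``partition of length at most $n$'' as the Theorem uses that phrase, and that $\Pf_{\lambda/\nu}$ reduces on the nose to $\Pf_\lambda$; both are immediate from the extreme-case formula cited above.
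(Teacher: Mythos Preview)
Your proposal is correct and matches the paper's intended approach: the corollary is stated immediately after the theorem with no separate proof, so it is meant to be read as the specialization $\nu=(0,\ldots,0)$, using extreme case~(1) to identify $\Pf_{\lambda/\nu}$ with $\Pf_\lambda$ and $\Pf_{\mu/\nu}$ with $\Pf_\mu$, and using the sentence after the theorem to reduce the index set to $\{\mu:\mu\subset\lambda\}$.
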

 
\begin{cor}\label{strict}  
If $\Pf_{\mu}(c) = 0$  for every partition $\mu$ that is not strict, then for any partition $\lambda$ of length $n$,
\[
 \Pf_{\lambda}(a(1)\,c, \ldots, a(n)\,c) = \sum  S_{\tilde{\lambda}/\tilde{\mu}}(a(1), \ldots, a(n)) \Pf_\mu(c),
\]
 the sum over strict partitions $\mu$ contained in $\lambda$.
\end{cor}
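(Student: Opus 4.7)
The plan is to derive this corollary as an immediate consequence of the preceding Corollary (the $\nu = \emptyset$ case of Theorem~4.5), combined with the second of the two ``extreme cases'' listed just before that theorem.

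First, I would apply the preceding corollary to write
\[
\Pf_\lambda(a(1)\,c, \ldots, a(n)\,c) \;=\; \sum_{\mu \subset \lambda} \Pf_{\lambda/\mu}(a(\bull)) \, \Pf_\mu(c),
\]
where the sum is over all partitions $\mu$ contained in $\lambda$ (padded by trailing zeros to length $n$). The hypothesis that $\Pf_\mu(c) = 0$ whenever $\mu$ fails to be strict then immediately kills every term for which some value of $\mu$ is repeated, so only sequences $\mu_1 > \mu_2 > \cdots > \mu_n \geq 0$ survive; these are exactly the strict partitions $\mu \subset \lambda$ (of length $n$, or of length $n-1$ padded with a single $0$).

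Next, for each such strict $\mu$, I would invoke extreme case~(2) of the definition of $\Pf_{\lambda/\mu}(a(\bull))$. Since all entries of $\mu$ are distinct, each equivalence class $\mu[k]$ in the defining expression is a singleton, so each $P[\mu,k]$ factor reduces to $\Pf_{\lambda_i - \mu_i}(a(i)) = a(i)_{\lambda_i - \mu_i}$, and the sum over distinct permutations of $\mu$ becomes the full signed sum over $\SSS_n$. Collecting signs identifies it with the determinant
\[
S_{\tilde{\lambda}/\tilde{\mu}}(a(\bull)) = \Det\bigl(a(i)_{\tilde{\lambda}_i - \tilde{\mu}_j + j - i}\bigr)_{1 \leq i,j \leq n},
\]
where $\tilde{\lambda}_i = \lambda_i + i - 1$ and $\tilde{\mu}_i = \mu_i + i - 1$. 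Substituting this identification into the displayed sum gives the asserted formula.

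The only obstacle I anticipate is purely bookkeeping: verifying that the ``distinct permutations of $\mu$, weighted by $\sgn(\sigma)$'' formula genuinely matches the Leibniz expansion of $S_{\tilde{\lambda}/\tilde{\mu}}(a(\bull))$ after the shift $i-1$, and that the partition-sum conditions in Theorem~4.5 (in particular $\mu_1 \leq \max(\lambda_i) = \lambda_1$) impose nothing beyond $\mu \subset \lambda$. Both are routine verifications, so the corollary reduces to a two-step deduction: eliminate non-strict $\mu$ by hypothesis, and rewrite $\Pf_{\lambda/\mu}$ as $S_{\tilde\lambda/\tilde\mu}$ via the extreme case.
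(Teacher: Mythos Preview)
Your proposal is correct and matches exactly the deduction the paper intends: the corollary is stated without a separate proof because it follows immediately from the preceding corollary (the $\nu=\emptyset$ case of Theorem~4.5) together with extreme case~(2) identifying $\Pf_{\lambda/\mu}(a(\bull))$ with $S_{\tilde\lambda/\tilde\mu}(a(\bull))$ once $\mu$ is strict. Your remarks about the bookkeeping (strict $\mu$ of length $n$ or $n-1$, and the matching of the signed sum with the Leibniz expansion) are the only points requiring verification, and they are indeed routine.
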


For the proof, we need two lemmas, the first of which is a straightforward calculation from the definition.
\begin{lem}  
Suppose $p = q+m$, for $m \geq 0$.  Then for any Chern series $c$,
\[ 
\Pf_{p,q}(c) + \Pf_{q,p}(c) = \sum_{k=0}^{\lfloor m/2 \rfloor} (-1)^{m-k}  ( \tbinom{m-k}{m-2k}+\tbinom {m-k-1}{m-2k} ) \Pf_{q+k,q+k}(c) .
\]
\end{lem}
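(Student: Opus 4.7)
The plan is a direct verification from the definition of the pseudo-pfaffian. Viewing both sides of the asserted identity as polynomials in $z$ and the commuting symbols $c_0, c_1, c_2, \ldots$, I would compare the coefficient of each monomial $z^I c_A c_B$ (with $A \geq B$) on the two sides.

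Setting $p = q+m$ and expanding via
\[
\Pf_{p,q}(c) = \sum_{0 \leq i \leq j \leq q} (-1)^j \left(\tbinom{j}{i} + \tbinom{j-1}{i}\right) z^i \, c_{p+j-i} \, c_{q-j},
\]
the LHS contribution to $z^I c_A c_B$ (with $A+B = 2q+m-I$) comes from up to four pairs $(i,j)$: two from $\Pf_{q+m,q}(c)$ (corresponding to $(c_{p+j-i}, c_{q-j}) = (c_A, c_B)$ or $(c_B, c_A)$) and two from $\Pf_{q,q+m}(c)$. In each case, the pair $(i,j)$ is uniquely determined by $(A,B,I)$, and the summand contributes an explicit binomial expression of the form $(-1)^j(\tbinom{j}{i}+\tbinom{j-1}{i})$.

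For the RHS, each $\Pf_{q+k,q+k}(c)$ is expanded analogously; the monomial $z^I c_A c_B$ picks out a unique $k$ determined by $2(q+k) = A + B + I$, with two analogous contributions corresponding to the two orderings of $(c_A, c_B)$, multiplied by the external coefficient $(-1)^{m-k}(\tbinom{m-k}{m-2k}+\tbinom{m-k-1}{m-2k})$. Equating the LHS and RHS coefficients then reduces the lemma to an elementary binomial identity, which follows by iterated application of Pascal's rule $\tbinom{n}{k} = \tbinom{n-1}{k-1} + \tbinom{n-1}{k}$.

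The main obstacle is the bookkeeping: carefully tracking signs, handling the diagonal case $A = B$ where the two orderings coincide (so the symmetrization is counted only once), and verifying that the indicator conditions (such as $i \leq j \leq q$ in $\Pf_{p,q}(c)$ versus $i \leq j \leq q+m$ in $\Pf_{q,p}(c)$) cut off contributions at the correct boundaries so that the LHS and RHS ranges align. The computation is mechanical but delicate, which is presumably why the authors describe it merely as ``straightforward calculation from the definition".
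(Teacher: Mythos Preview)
Your proposal is correct and is exactly the approach the paper indicates: the authors give no argument beyond calling it ``a straightforward calculation from the definition,'' and what you describe is precisely the shape that calculation takes. Your coefficient-matching outline, including the observation that each monomial $z^I c_A c_B$ pins down a unique $k$ on the right via $2(q+k)=A+B+I$, is the right organization; the remaining work is the bookkeeping you flag, and there is nothing further in the paper to compare against.
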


\begin{lem}  
For any sequence $\lambda$ of length $n$ and partition $\mu$ of length at most $n$, and any $1 \leq k \leq n$ and element $x$,
\[ 
\begin{aligned} \Pf_{\lambda/\mu}&(a(1), \ldots, (1 + x\,t)\,a(k), \dots ,a(n)) =  \\
& \Pf_{\lambda/\mu}(a(1), \ldots, a(n)) \, +x \, \Pf_{(\lambda_1,\ldots, \lambda_k - 1,\ldots, \lambda_n)/\mu}(a(1), \ldots, a(n)) .
\end{aligned}
\]
\end{lem}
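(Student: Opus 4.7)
The plan is to reduce the claim to the analogous linearity statement for non-skew pseudo-pfaffians, which I would prove by induction on $n$ using the recursive definition, and then assemble the skew case from the product structure of $P[\sigma(\mu)]$.

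The key intermediate lemma I would establish first is the following: for any sequence $\lambda$ of length $n$, any Chern series $c(1),\ldots,c(n)$, any $1 \le k \le n$, and any ring element $x$,
\[
\Pf_\lambda(c(1),\ldots,(1+x\,t)\,c(k),\ldots,c(n)) \,=\, \Pf_\lambda(c(\bull)) \,+\, x\,\Pf_{\lambda^{(k)}}(c(\bull)),
\]
where $\lambda^{(k)}$ denotes $\lambda$ with its $k$-th entry decreased by $1$. The base case $n=1$ is the trivial identity $((1+x\,t)c)_p = c_p + x\,c_{p-1}$. For $n=2$ the proof is a direct substitution in the explicit formula for $\Pf_{p,q}(a,b)$: replacing $a_m$ by $a_m + x\,a_{m-1}$ (or the analogous replacement in $b$) produces a correction which, after extending or truncating the summation range using $b_{-1}=0$, is exactly $x\,\Pf_{p-1,q}(a,b)$ (resp.\ $x\,\Pf_{p,q-1}(a,b)$). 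For $n\ge 3$ the recursive definition writes $\Pf_\lambda$ as an alternating sum of products of two strictly smaller pseudo-pfaffians, and for each summand $c(k)$ appears in exactly one of the two factors; applying the base or inductive hypothesis factor by factor and collecting the $x$-coefficient gives back the recursive expansion of $\Pf_{\lambda^{(k)}}$. For even $n$ the same idea applies, splitting on whether $k=1$, $k=j$, or $k\notin\{1,j\}$, where $j$ is the pairing index in the recursion.

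Granted this, the skew case is a bookkeeping argument. Writing
\[
\Pf_{\lambda/\mu}(a(\bull)) \,=\, \sum_\sigma \sgn(\sigma)\, P[\sigma(\mu)],
\]
and recalling that $P[\sigma(\mu)]$ is a product over the distinct values $j$ of the pseudo-pfaffians $P[\sigma(\mu),j]$, whose arguments are the $c(i)=a(i)$ for those $i$ with $\sigma(\mu)_i=j$ and whose entries are the corresponding $\lambda_i - j$, I would observe that for each fixed $\sigma$ the position $k$ belongs to exactly one factor, namely $P[\sigma(\mu),\sigma(\mu)_k]$, in which its entry is $\lambda_k - \sigma(\mu)_k$. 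Applying the pseudo-pfaffian lemma to that single factor, and using the identity $\lambda_k - \sigma(\mu)_k - 1 = \lambda^{(k)}_k - \sigma(\mu)_k$, shows that the substitution $a(k)\mapsto(1+x\,t)\,a(k)$ adds to $P[\sigma(\mu)]$ exactly $x$ times the same product with $\lambda$ replaced by $\lambda^{(k)}$. The signs $\sgn(\sigma)$ pass through untouched, so summing over $\sigma$ produces the claimed total correction $x\,\Pf_{\lambda^{(k)}/\mu}(a(\bull))$.

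The main obstacle is purely bookkeeping in the pseudo-pfaffian lemma: one must patiently check that the even-$n$ recursion, which treats row $1$ asymmetrically from rows $2,\ldots,n$, remains compatible with the perturbation for every choice of the modified index $k$. Once this is handled, the skew case requires no further input beyond the observation that each row index sits in a unique factor of $P[\sigma(\mu)]$.
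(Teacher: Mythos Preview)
Your proposal is correct and follows essentially the same route as the paper: first establish the linearity for the non-skew pseudo-pfaffians $\Pf_\lambda$ by checking $n=1,2$ directly and then inducting via the recursive definition, and then deduce the skew case from the fact that each $a(k)$ appears in exactly one factor of each product $P[\sigma(\mu)]$. The paper's proof is much terser (it essentially just records the $n=1,2$ identities and invokes the recursion), but the argument you spell out is precisely what is being asserted.
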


\begin{proof}
This is evident when $n = 1$.  For $n=2$, it follows from the definition that 
\[ 
\begin{aligned} &\Pf_{p,q}((1+x\,t)a,b) = \Pf_{p,q}(a,b)+ x \Pf_{p-1,q}(a,b), \; \text{ and }\\ 
&\Pf_{p,q}(a,(1+x\,t)b)+\Pf_{p,q}(a,b)+ x \Pf_{p,q-1}(a,b) .
\end{aligned} 
\]
The general case follows from this and the inductive definitions of pseudo-pfaffians.
\end{proof}

The proof of the theorem is similar to the corresponding theorem for Schur-like determinants.  First consider 
the case when $a(k) = 1$ for all $k$.   In this case, if $\lambda$ is a partition, the only $\mu$ for which 
 $\Pf_{\lambda/\mu}(1)$ is not zero is when $\mu = \lambda$, and then it is $1$.  If $\lambda$ is not a partition, 
 apply the first lemma to two consecutive entries $q = \lambda_i < p = \lambda_{i+1}$.  This writes all terms 
 on both sides of the equation to be proved -- in the same way -- as a sum of terms with $\lambda$ closer to a partition; 
 one concludes this case as before by induction.
 
 The general case is completed exactly as before, by induction on the entries in $\lambda$, 
moving from the case where each $a(k) = 1$ to the case where each $a(k)$ is a product of factors
$(1 + x_{k,i} t)$, so is general.

J\'{o}zefiak and Pragacz \cite{JP} and Nimmo \cite{nimmo} defined a notion of skew pfaffians that similarly interpolate between pfaffians and determinants.  We do not know how to relate these two notions.

\section{A Tableau Formula for Schur-like Pfaffians}

This time we have the following proposition: 
\begin{prop}  
For $\lambda \supset \mu$ strict partitions, suppose $a = \prod_{i=1}^m \frac{1+y_i t}{1-x_i t}$, where $y_i = x_i + z$ for all $i$.  Then 
\[
 \Pf_{\lambda/\mu}(a) = \sum (x,y)^T ,
\]
the sum over all tableaux $T$ on the shifted skew shape $\lambda/\mu$  with entries 
$1' < 1 < 2' < 2 < \dots < m' < m$, weakly increasing along rows and down columns, with no $k$ 
repeated in a column and no $k'$ repeated in a row; here $(x,y)^T$ denotes the result of 
replacing each $k$ with $x_k$ and each $k'$ with $y_k$.
\end{prop}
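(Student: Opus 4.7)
The plan is to proceed by induction on $m$, paralleling the structure of the proof of Theorem~\ref{t.tableau}. For the base case $m=0$ we have $a=1$, so $a_0=1$ and $a_k=0$ for $k\geq 1$; unwinding the defining formulas for pseudo-pfaffians shows $\Pf_{\lambda/\mu}(1) = \delta_{\lambda,\mu}$, which matches the sole empty tableau when $\lambda=\mu$ and no tableau otherwise.

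For the inductive step, split off the last factor by writing $a = a' \cdot b$ with $a' = \prod_{i=1}^{m-1}(1+y_i t)/(1-x_i t)$ and $b = (1+y_m t)/(1-x_m t)$. Applying the skew pseudo-pfaffian identity (Theorem 4.5) with each row factor $a(i)$ equal to $a'$ and with $c = b$ yields
\[
\Pf_{\lambda/\mu}(a) = \sum_{\eta} \Pf_{\lambda/\eta}(a') \, \Pf_{\eta/\mu}(b),
\]
summed over partitions $\eta$ between $\mu$ and $\lambda$. By the inductive hypothesis, $\Pf_{\lambda/\eta}(a')$ is the sum of weights of tableaux on the shifted skew shape $\lambda/\eta$ with entries in $\{1',1,\ldots,(m-1)',m-1\}$. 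Concatenating such a tableau with one on $\eta/\mu$ using only the colors $m'$ and $m$ should produce a tableau on $\lambda/\mu$ with the full entry set and with the correct total weight, and one verifies that this concatenation is a bijection with the desired set of tableaux.

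The crux is therefore the single-color case: showing that $\Pf_{\eta/\mu}(b)$, for strict $\eta \supset \mu$, equals the sum of weights of tableaux on the shifted skew shape $\eta/\mu$ whose entries come only from $\{m', m\}$. This is the shifted/pfaffian analogue of Corollary~\ref{c.border}. The claim should be that $\Pf_{\eta/\mu}(b)$ expands as a product indexed by a shifted border-strip decomposition of $\eta/\mu$ with monomial factors in $x_m$, $y_m$, and $x_m+y_m$. The hypothesis $y_m = x_m + z$ is indispensable here: the pseudo-pfaffian carries extra terms weighted by powers of $z$, arising from the $\bigl(\binom{j}{i}+\binom{j-1}{i}\bigr)z^i$ coefficients in the $n=2$ definition and their inductive descendants for larger $n$, and the substitution $z = y_m - x_m$ is precisely what cancels these spurious contributions to produce the clean border-strip sum; it also forces non-strict intermediate $\eta$ in the decomposition above to contribute zero. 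An appealing alternative that would bypass the intricate single-color analysis is to adapt Stembridge's pfaffian Lindström--Gessel--Viennot theorem directly: construct a directed graph whose non-intersecting path systems biject with shifted tableaux on $\lambda/\mu$ and whose pfaffian of edge-weight matrices equals $\Pf_{\lambda/\mu}(a)$, in parallel with the network used in the proof of Theorem~\ref{t.tableau}.
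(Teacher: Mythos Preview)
Your inductive scaffold via Theorem~4.5 is exactly the one the paper uses (the paper says in one sentence: ``Using the theorem of the preceding section, this follows by induction on $m$, provided we verify the case with $m=1$'').  So the reduction to a single factor is correct and matches the paper; the difference is that you stop at the point where the actual work begins.

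The gap is the single-factor case.  You describe what ``should'' happen (a shifted border-strip expansion in $x_m,y_m,x_m+y_m$, with the substitution $z=y_m-x_m$ cancelling the extra $z$-weighted terms and annihilating non-strict intermediate $\eta$), but you do not prove any of it, and you do not exhibit a mechanism that forces the cancellation.  The paper's mechanism is quite concrete and different in flavor from what you sketch: for $m=1$ one has $a_p=(x_1+y_1)x_1^{\,p-1}$, and the hypothesis $y_1=x_1+z$ gives the product identity $a_pa_q=2a_{p+q}-z\,a_{p+q-1}$.  Feeding this into the general relation
\[
a_pa_q=\Pf_{p,q}(a)+\sum_{i=1}^{q-1}\bigl(2\,\Pf_{p+i,q-i}(a)-z\,\Pf_{p+i-1,q-i}(a)\bigr)+2a_{p+q}-z\,a_{p+q-1}
\]
(together with $\Pf_{p,p}(a)=0$) yields $\Pf_{p,q}(a)=0$ for all $p\geq q\geq 1$ by induction on $q$.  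This two-variable vanishing is the crux: via the Laplace expansion defining the pseudo-pfaffians it forces $P[\sigma(\mu),0]=0$ whenever $\mu$ has length at most $n-2$, so $\Pf_{\lambda/\mu}(a)$ survives only when $\mu$ has length $n$ or $n-1$, i.e.\ when $\mu$ padded by a single zero is strictly decreasing.  In that range the skew pseudo-pfaffian coincides \emph{by definition} (case~(2)) with the Schur determinant $S_{\tilde\lambda/\tilde\mu}(a)$, and the tableau formula is then literally Proposition~\ref{Atableaux}.  The same vanishing is what disposes of non-strict intermediate $\eta$ in the inductive step; your one-line assertion that the condition $y_m=x_m+z$ ``forces non-strict intermediate $\eta$ to contribute zero'' is correct in spirit but is not a proof---it is exactly this $\Pf_{p,q}(a)=0$ computation that does the job.

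So: the route is right, but the proof is missing its engine.  Replace the border-strip heuristic with the explicit vanishing $\Pf_{p,q}(a)=0$ for $p\geq q\geq 1$ (proved from $a_pa_q=2a_{p+q}-z\,a_{p+q-1}$), observe that this collapses the $m=1$ case to $S_{\tilde\lambda/\tilde\mu}(a)$, and invoke Proposition~\ref{Atableaux}.  The Stembridge LGV alternative you mention would be genuinely different, but as stated it is only a suggestion, not an argument.
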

\begin{proof}
Using the theorem of the preceding section, this follows by induction on $m$, provided we verify the case 
with $m=1$.  We can verify the special case that $\Pf_{p,q}(a) = 0$ for $p \geq q \geq 1$ directly, 
This follows from the following general formula, which holds for $a$ because $a$ satisfies the relations $a_{p,p} = 0$: 
\[ 
 a_p \, a_q = \Pf_{p,q}(a) + \sum_{i=1}^{q-1}(2 \Pf_{p+i,q-i}(a) - z\, \Pf_{p+i-1,q-i}) + 2a_{p+q} - z\,a_{p+q-1}.
\]
Using this, and induction on $q$, the vanishing of $\Pf_{p,q}(a)$ is equivalent to the equation 
$ a_p \, a_q = 2a_{p+q} - z\,a_{p+q-1}$, which follows from the fact that $a_p = (x_1+y_1)x_1^{p-1}$ and the 
fact that $y_1 = x_1 + z$.

Suppose the length of $\lambda$ is $n$.  It follows from the preceding paragraph that $\Pf_{\lambda/\mu}(a)$ vanishes whenever the length of  $\mu$ is not $n$ or $n-1$, 
since  any of the terms  $P[\sigma(\mu),0]$ will vanish.  So we are reduced to the case where $\Pf_{\lambda(a)/\mu}(a) = S_{\tilde{\lambda}/\tilde{\mu}}(a)$, 
which we have seen in Proposition~\ref{Atableaux}.
\end{proof}

\section{Difference Operators in Type C}

In (twisted) type C we work in the algebra $\bGamma[x,y]$,  with variables $x_i$ and $y_i$ for positive $i$, where $\bGamma$ is the residue ring of $\bLambda = \Z[z,c] = \Z[z,c_1,c_2, \ldots ]$ modulo 
an ideal of relations.  For this, define, for any nonnegative integers $p \geq q$, 
\[ 
 C_{p\,q} = \sum_{0 \leq i \leq j \leq q} (-1)^j \, (\tbinom{j}{i}+\tbinom{j-1}{i}) \, z^i \, c_{p-q+i} \, c_{q-j} .
\]
Set
\[
 \bGamma = \Z[z,c]/(C_{1\,1},C_{2\,2}, C_{3\,3}, \ldots ) .
\]
 
All $\Pf_\mu(c)$ vanish in $\bGamma$ if $\mu$ is a partition which is not strict, and those with $\mu$ strict form a basis for $\bGamma$ over $\Z[z]$.  So Corollary \ref{strict} applies.  We write $Q_\mu = \Pf_\mu(c)$ for this basis.

This ring  $\bGamma[x,y]$ has difference operators $\del_k^x$ and $\del_k^y$ for all  $k \geq 0$.  We take $\del_k = \del_k^x$, the case of $\del_k^y$ being obtained by interchanging each $x_i$ with $y_i$.  Again those for $k > 0$ are essentially the same as in type A, with  $\bGamma$ acting as scalars.  But $\del_0$  is different.  The involution $s_0$ interchanges $x_1$ and $z - x_1$.

One has $s_0(c) = c \cdot \frac{1+x_1}{1+z-x_1}$.  In fact, $\del_0$, 
and the related endomorphism  $s_0$ of $\bG[x]$, are determined by the formulas:
$\del_0(x_1) = -1$, $s_0(x_1) = z - x_1$, and $\del_0(c_1) = 1$.  
From the equation 
\[ 
 \del_0(f) = \frac{f - s_0(f)}{z-2x_1} ,
\]
one sees that $s_0(c_k) = c_k+(2x_1-z)\sum_{i+j=k-1}(x_1-z)^i c_j$.

The following proposition is a special case of the preceding formulas, since we know $\Pf_{\lambda/\mu}(a)$ for $a = \frac{1+x_1\, t}{1+(z-x_1)\, t}$.

\begin{prop}
For any strict partition $\lambda$, 
\[ 
 s_0(Q_\lambda(c)) = \sum \, {x_1}^{h(\lambda/\mu)} \, (x_1-z)^{v(\lambda/\mu)} \, (2x_1-z)^{k(\lambda/\mu)}  Q_\mu(c) ,
\]
the sum over all strict partitions $\mu$ obtained from $\lambda$ by removing a border 
strip (from its shifted Young diagram), with $h(\lambda/\mu)$ the number of horizontal lines 
between border boxes, $v(\lambda/\mu)$ the number of vertical lines between border 
boxes, and $k(\lambda/\mu)$ the number of connected components in the border strip.
\end{prop}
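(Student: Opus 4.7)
The plan is to recognize the proposition as a direct application of Corollary~\ref{strict} together with the border-strip form of the Schur coefficient from Corollary~\ref{c.border}, applied to the Chern series $a = \frac{1+x_1\,t}{1+(z-x_1)\,t}$ by which $s_0$ acts on $c$.

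First, since $s_0$ is a $\Z[z]$-algebra endomorphism of $\bGamma[x,y]$ and $Q_\lambda(c) = \Pf_\lambda(c,\ldots,c)$ is built polynomially from the Chern generators $c_k$, I would write
\[
 s_0(Q_\lambda(c)) \, = \, \Pf_\lambda(s_0(c), \ldots, s_0(c)) \, = \, \Pf_\lambda(a\,c, \ldots, a\,c),
\]
using the formula $s_0(c) = a \cdot c$ recorded at the beginning of the section.

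Next, I would invoke Corollary~\ref{strict} with all $a(i) = a$. Because $\Pf_\mu(c) = 0$ in $\bGamma$ whenever $\mu$ is not strict, the expansion collapses to a sum over strict partitions $\mu \subset \lambda$:
\[
 \Pf_\lambda(a\,c, \ldots, a\,c) \, = \, \sum_{\substack{\mu \subset \lambda \\ \mu \text{ strict}}} S_{\tilde{\lambda}/\tilde{\mu}}(a, \ldots, a) \, Q_\mu.
\]
The uniform-row Schur coefficient $S_{\tilde{\lambda}/\tilde{\mu}}(a,\ldots,a)$ is precisely the quantity computed in Corollary~\ref{c.border}: rewriting $a = \frac{1+y\,t}{1-x\,t}$ with $y = x_1$ and $x = x_1 - z$, that corollary says the coefficient vanishes unless $\tilde{\lambda}/\tilde{\mu}$ is a border strip, in which case it equals
\[
 x^{v(\tilde{\lambda}/\tilde{\mu})}\, y^{h(\tilde{\lambda}/\tilde{\mu})}\, (x+y)^{k(\tilde{\lambda}/\tilde{\mu})} \, = \, (x_1-z)^v\, x_1^h\, (2x_1 - z)^k.
\]

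To finish, I would invoke the observation recorded in Section~4 that for strict $\lambda \supset \mu$ the shifted skew shape $\lambda/\mu$ is literally the same diagram as the unshifted $\tilde{\lambda}/\tilde{\mu}$; hence the border-strip condition and the statistics $v$, $h$, $k$ transfer unchanged, and assembling the pieces yields the claimed sum. The only ``obstacle'' is bookkeeping in the substitution $x = x_1 - z$, $y = x_1$ and in the shifted-versus-unshifted translation; no substantive new combinatorics or algebra is required, since every ingredient has already been established in the earlier sections.
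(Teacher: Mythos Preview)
Your proposal is correct and matches the paper's own argument. The paper dispatches this proposition in a single sentence (``a special case of the preceding formulas, since we know $\Pf_{\lambda/\mu}(a)$ for $a = \frac{1+x_1\,t}{1+(z-x_1)\,t}$''), and you have simply unpacked that sentence: applying Corollary~\ref{strict} to pass to $S_{\tilde\lambda/\tilde\mu}(a)$, then reading off the border-strip coefficient via Corollary~\ref{c.border} with $x = x_1 - z$, $y = x_1$, and finally using the identification of shifted $\lambda/\mu$ with unshifted $\tilde\lambda/\tilde\mu$.
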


\begin{cor}
\[ 
 \del_0(Q_\lambda(c)) = \sum x_1^{h(\lambda/\mu)} \, (x_1-z)^{v(\lambda/\mu)} \, (2z_1-z)^{k(\lambda/\mu)-1}  Q_\mu(c) ,
\]
the sum over strict $\mu$ obtained from $\lambda$ by removing a non-empty border 
strip.
\end{cor}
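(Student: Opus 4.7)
The proof is a short calculation that extracts $\del_0(Q_\lambda(c))$ from the formula for $s_0(Q_\lambda(c))$ just proved. My plan is to apply the identity
\[ \del_0(f) = \frac{f - s_0(f)}{z - 2x_1} \]
with $f = Q_\lambda(c)$. In the sum for $s_0(Q_\lambda(c))$ given by the preceding proposition, the term with $\mu = \lambda$ corresponds to the empty border strip, for which $h(\lambda/\lambda) = v(\lambda/\lambda) = k(\lambda/\lambda) = 0$, so the coefficient is $1$ and the contribution is $Q_\lambda(c)$ itself. This term cancels against the $f = Q_\lambda(c)$ in the numerator, leaving
\[ Q_\lambda(c) - s_0(Q_\lambda(c)) = -\sum_{\mu \subsetneq \lambda} x_1^{h(\lambda/\mu)}\,(x_1-z)^{v(\lambda/\mu)}\,(2x_1-z)^{k(\lambda/\mu)}\, Q_\mu(c), \]
with the sum over strict partitions $\mu \subsetneq \lambda$, i.e., those obtained from $\lambda$ by removing a \emph{non-empty} border strip.

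The remaining step is to divide by $z - 2x_1 = -(2x_1 - z)$. Since $\mu \subsetneq \lambda$ forces $k(\lambda/\mu) \geq 1$, every term in the sum is divisible by $(2x_1-z)$, so the division is genuinely a division in the polynomial ring (no rational functions appear). The minus sign from $-(2x_1 - z)$ in the denominator cancels the overall minus sign of the sum, and the exponent $k(\lambda/\mu)$ drops to $k(\lambda/\mu) - 1$, producing the stated formula
\[ \del_0(Q_\lambda(c)) = \sum_{\mu} x_1^{h(\lambda/\mu)}\,(x_1-z)^{v(\lambda/\mu)}\,(2x_1-z)^{k(\lambda/\mu)-1}\, Q_\mu(c), \]
with the sum over strict $\mu$ obtained from $\lambda$ by removing a non-empty border strip. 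There is no real obstacle here; the only point to check is the cancellation of the $\mu = \lambda$ term and the divisibility by $2x_1 - z$, both of which are immediate from the proposition. (There is a minor typo in the statement: $(2z_1-z)$ should read $(2x_1-z)$.)
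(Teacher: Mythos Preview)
Your argument is correct and is exactly the intended one: the paper leaves the proof of this corollary implicit, deriving it from the preceding proposition via $\del_0(f) = (f - s_0(f))/(z-2x_1)$ just as you do (compare the parallel passage in \S3.1 for type~A). Your observation about the typo $(2z_1-z)$ for $(2x_1-z)$ is also correct.
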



\end{document}